\documentclass[10pt,a4paper]{article}
\usepackage[utf8]{inputenc}
\usepackage{amsmath}
\usepackage{amsfonts}
\usepackage{amssymb}
\usepackage{amsthm}
\usepackage{float}
\usepackage{aliascnt}
\usepackage{graphicx} 
\usepackage{enumitem}   
\newtheorem{lemma}{Lemma}

\newtheorem*{theorem}{Theorem}

\newtheorem*{remark}{Remark}

\title{Levin's conjecture for an equation of length nine
}
\author{Muhammad Saeed Akram$^{1,*}$, Khawar Hussain$^{2}$}

\date{$^{1}$Department of Mathematics, Ghazi University, Faculty of Science, Dera Ghazi Khan 32200, Pakistan\\
$^{2}$Department of Mathematics, Khwaja Fareed University of
Engineering and Information Technology,
Rahim Yar Khan 64200, Pakistan\\
$^{*}$Correspondence: mrsaeedakram@gmail.com}
\begin{document}
\maketitle
\noindent \textbf{Abstract.}
  Levin's conjecture has been established to hold for group equations of length up to seven. Recently, it is shown that Levin's conjecture is also true (modulo exceptional cases) for some group equations of length eight and nine.
In this paper, we consider a group equation of length nine and show that Levin's conjecture is true for this equation modulo some exceptional cases.

%
\noindent \textbf{Keywords:}
Group equations; relative group presentations; asphericity; weight test; curvature distribution.
\newline
\textbf{2010 Mathematics Subject Classifications:} 20F05, 20E06, 57M05
\section{Introduction:}

Let $G$ be any group and choose an element $t$ distinct from $G$. A
\emph{group equation on $G$} is a word in free product $G \ast
\langle t \rangle$, where $\langle t \rangle$ is a free group
generated by $t$. That is, a group equation on $G$ is an equation of
the form $s(t)= 1$ where
\begin{equation*}
s(t) = d_1t^{\epsilon_1}d_2t^{\epsilon_2}\dots d_nt^{\epsilon_n},\
d_i \in G,\ \epsilon_i = \pm 1
\end{equation*}
and $d_i \neq 1$ if $\epsilon_i = -\epsilon_{i+1}$ for $1 \leq i
\leq {n-1}$. The group equation $s(t) = 1$ is \emph{solvable
over} $G$ if the equation $s(t) = 1$ has solution in some group
$H$ that contains $G$. That is, if there exist an injective group
homomorphism $\tau :G\longrightarrow H$ such that
$h_1h^{\epsilon_1}\dots h_nh^{\epsilon_n} = 1$ in $H$, where $h_i =
\tau(d_i)$ and $h \in H$. The \emph{length} of group equation $s(t)
= 1$ is denoted by $|s(t)|$ and defined as, the absolute sum of
power $t$, that is,
$$|s(t)|=|\epsilon_1|+|\epsilon_2|
+\dots+|\epsilon_n|.$$ The group equation $s(t) = 1$ is said to
be \emph{singular} if $\sum \epsilon_i=0$, otherwise, it is called 
\emph{non-singular}.
More details regarding group equations can be found in \cite{LS}.

The problem of the solvability of group equation is motivated by the
solvability of polynomial equation of degree $n$ over a field $F$.
It is well known that a polynomial equation
$b_0+b_1x+b_2x^2+\dots+b_nx^n=0$ of degree $n$  with coefficients
$b_i$ from $F$ always has a solution in a suitable extension $K$ of
degree at most $n$ over $F$.
Influenced by the solvability of polynomial equations over fields, Levin in \cite{L} studied the analogous problem for groups. Levin established the solvability of the group equation $s(t) = 1$ over arbitrary groups, with $\epsilon_i$ non-negative and not necessarily 1.
In this direction, a preliminary result was already obtained by
Higman et al  in \cite{HNN}, where they have shown the solvability
of group equation $t^{-1}g_1tg_2 =1$ over any torsion free groups.
This fact helped Levin to conjecture the solvability of group
equation $s(t) = 1$ without restricting the negative powers of
$t$. This conjecture is explicitly stated here  for future reference.
\newline
\textbf{Levin conjecture}: \textit{Let group $G$ is torsion free, then any equation over $G$ is solvable}.

It is important to remark here that if we remove the restriction of torsion freeness on the group $G$ then the group equations may not be solvable. For example, if $G$ is any group that is not torsion free and we take the equation $
u(t) = c^{-1}_1tc_2t^{-1} = 1$, with $c_1,c_2\in G$ having
different orders then the equation is not solvable over $G$.

Several attempts have been made in the literature to solve Levin's conjecture by specifying the length of the group equation. For equations of length less than or equal to two, an affirmative answer to Levin's conjecture is an immediate consequence of the main result in \cite{L}. Later, J. Howie in \cite{H1} obtained an affirmative answer to Levin's conjecture for equations of length $|s(t)|=3$. Then Edjvet and Howie in \cite{EH} proved that Levin's conjecture holds for equations of length $|s(t)|=4$. For equations of length $|s(t)|\leq 5$, Evangelidou in \cite{E1} has shown that Levin's conjecture is true.
Ivanov and Klyachko in \cite{IK} have proved the conjecture for equations of length $|s(t)|\leq 6$.  Recently, Mairaj Bibi and Edjvet in \cite{BE} have established that Levin's conjecture is true for equations of length $|s(t)|=7$.
More recently, Mairaj et al in \cite{BAIA}, have obtained an affirmative answer to Levin's conjecture for an equation of length $|s(t)|=8$ modulo one exceptional case. For equations of length 9, there are only three equations of length 9 that are open \cite{AAI, ABA}. In \cite{AAI} and \cite{ABA}, the authors have proved Levin's conjecture for an equation of length $|s(t)|=9$ modulo some exceptional cases. For more recent work in this direction, see \cite{AA, ABA2, ABS}. In this paper, we consider the following group equation of length nine
\begin{equation*}\label{problem} atbtct^{-1}dtetft^{-1}gthtit^{-1}=1
\end{equation*} and prove Levin's conjecture modulo some exceptional cases. 
\section{Methodology:}

Bogley and Pride \cite{BP} introduced the relative presentation $\mathcal{P} = \langle G,t~|~r \rangle$,
where $G$ is a torsion free group and $r$ is the set of cyclically reduced group words from $G \ast \langle t \rangle$. Let $P(G)$ be a group defined from  $\mathcal{P}$, which is the quotient
of $G \ast \langle t \rangle$ by the normal closure of $r$.  The presentation $\mathcal{P}$ is
called \emph{orientable} if there does not exist any member of $r$ which is a cyclic
permutation of its inverse. It is well known that the group equation $s(t)=1$ is solvable if the natural homomorphism $\tau: G \rightarrow P(G)$ is injective.
 Bogley and Pride in \cite{BP} proved that  the natural homomorphism $\tau: G \longrightarrow P(G)$
is injective if the presentation $P$ is aspherical and
orientable. The notion of asphericity is discussed in detail  by Bogley and Pride in \cite{BP}. In our case, we are considering the equation $s(t)=atbtct^{-1}dtetft^{-1}gthtit^{-1}$ therefore  $r$ and $x$ are both consists of a single element so $\mathcal{P}$ is always orientable \cite{BE}. 
Therefore, to establish the validity of Levin's conjecture, it is only left to prove that presentation $\mathcal{P}$ is aspherical. To prove that  group equation $s(t)=1$ is aspherical, we follow the methods used by Mairaj Bibi and Edjvet in \cite{BE} in proving Levin's conjecture.
In particular, we apply weight test, curvature distribution, and the change of variable methods to establish the  asphericity of presentation $\mathcal{P}$. We refer to \cite{BP} and \cite{E3} for the detailed literature regarding the weight test and curvature distribution method.

All the necessary definitions concerning the weight test can be found in \cite{BP}. The weight test states that if the star graph $\Gamma$ of $\mathcal{P}$ admits an aspherical weight function $\theta$, then $\mathcal{P}$ is aspherical \cite{BP}. All the definitions related to pictures can be found in \cite{H}. The curvature distribution asserts that if $K$ is a reduced picture over $\mathcal{P}$ then by Euler (or Gauss-Bonnet) formula, the sum of the curvature of all regions of $K$ is $4 \pi$, that is, $K$ contains regions of positive curvature \cite{EH}. Then, if for each region $\Delta$ of $K$ of positive curvature $c(\Delta)$, there is a neighboring region $\widehat{\Delta}$, uniquely associated with $\Delta$, such that  $c(\widehat{\Delta}) + c(\Delta) \leq 0$, then the sum of the curvature of all regions of $K$ is non-positive, which implies that $\mathcal{P}$ is aspherical \cite{BE,B}.
\section{Main Results:}

\vskip 0.4 true cm
Let $G$ be a torsion free group. By applying the transformation $u=tb$ on $s(t)= atbtct^{-1}dtetft^{-1}gthtit^{-1}=1$, it can be assumed that $b=1$. Recall that $\mathcal{P}=\langle G,t~|~s(t)\rangle$ where $$s(t)= atbtct^{-1}dtetft^{-1}gthtit^{-1} \quad (a,c,d,f,g,i \in G\setminus \{1\}, b=1,e,h \in G ).$$
 Furthermore, it can be assumed without any loss that $G$ is not cyclic and $G=\langle a,b,c,d,e,f,g,h,i \rangle$ \cite{H1}. Suppose
that $K$ is a reduced spherical diagram over $\mathcal{P}$. Up to
cyclic permutation and inversion, the regions of $K$ are given by $\Delta$ as shown in Figure \ref{1}(i). The star graph $\Gamma$ of $\mathcal{P}$ is given by Figure \ref{1}(ii).


\begin{figure}[H]
\centering
        \includegraphics[width=11cm]{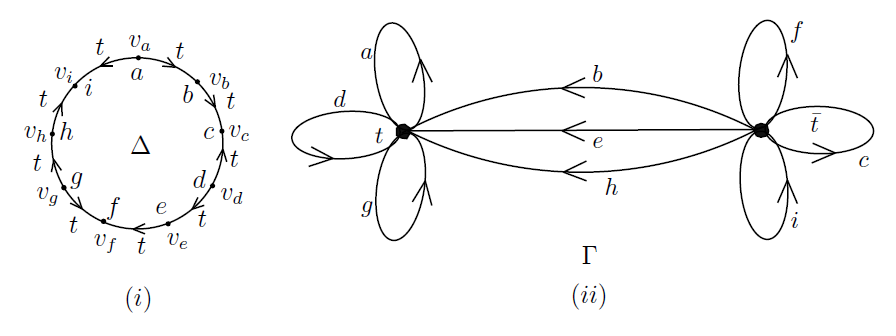}
        \caption{Region $\Delta$ of $K$ and star graph $\Gamma$ of $\mathcal{P}$ }
    \label{1}
\end{figure}
Looking at closed paths in star graph $\Gamma$, using the fact that $G$ is torsion free and working modulo cyclic permutation and inversion, the possible labels of vertices of degree 2 for a region $\Delta$ of $K$ are 
$$ S = \{ad, ad^{-1}, ag, ag^{-1}, dg, dg^{-1}, cf, cf^{-1}, ci, ci^{-1}, fi, fi^{-1}, he^{-1}, hb^{-1}, eb^{-1} \}. $$
We can work modulo equivalence, that is, modulo $t \leftrightarrow t^{-1}$, cyclic permutation, inversion, and 
$$a\leftrightarrow c^{-1}, b\leftrightarrow b^{-1}, d\leftrightarrow i^{-1}, e\leftrightarrow h^{-1}, f\leftrightarrow g^{-1}.$$
We will proceed according to the number $N$ of labels in $S$ that are admissible \cite{BP} and classify the cases correspondingly \cite{BE}. The following remark substantially reduces the number of cases to be considered.

\begin{remark}
\normalfont
The following observations hold trivially.
\begin{enumerate}

\item  If all the admissible cycle have length greater than $2$ in the region $\Delta$ then $c(\Delta)\leq c(3,3,3,3,3,3,3,3,3)= - \pi$;

\item If two admissible cycle have length $2$ and all other admissible cycles have length at least $3$ in the region $\Delta$ then $c(\Delta) \leq d(2,2,3,3,3,3,3,3,3)= -\pi/3 $;

\item If three admissible cycle have length $2$ and all other admissible cycle have length at least $3$ in the region $\Delta$ then $c(\Delta) \leq d(2,2,2,3,3,3,3,3,3)= 0$;

\item If $a=d^{-1}$ and $a=d$ are admissible then $d^2 = 1,$ a contradiction;

\item If $a=g^{-1}$ and $a=g$ are admissible then $g^2 = 1,$ a contradiction;

\item If $d=g^{-1}$ and $d=g$ are admissible then $g^2 = 1,$ a contradiction;

\item If $c=f^{-1}$ and $c=f$ are admissible then $f^2 = 1,$ a contradiction;

\item If $c=i^{-1}$ and $c=i$ are admissible then $i^2 = 1,$ a contradiction;

\item If $f=i^{-1}$ and $f=i$  are admissible then $i^2 = 1,$ a contradiction;

\item At most three of $a=d^{-1}, a=d, a=g^{-1}, a=g, d=g^{-1}, d=g$ are admissible;

\item At most three of $c=f^{-1}, c=f, c=i^{-1}, c=i, f=i^{-1}, f=i$ are admissible;

\item If any two of $a=d^{-1}, a=^{-1}g, d=g$ are admissible then so is the third;

\item If any two of $a=d^{-1}, a=g, d=g^{-1}$ are admissible then so is the third;

\item If any two of $a=d, a=g^{-1}, d=g^{-1}$ are admissible then so is the third;

\item If any two of $a=d, a=g, d=g$ are admissible then so is the third;

\item If any two of $c=i^{-1}, c=f^{-1}, f=i$ are admissible then so is the third;

\item If any two of $c=i^{-1}, c=f, f=i^{-1}$ are admissible then so is the third;

\item If any two of $c=i, c=f^{-1}, f=i^{-1}$ are admissible then so is the third;

\item If any two of $c=i, c=f, f=i$ are admissible then so is the third;

\item If any two of $h=e, h=b, e=b$ are admissible then so is the third.

\end{enumerate}
\end{remark}

In what follows, the vertex labels correspond to the closed paths in the star graph $\Gamma$. 
The proofs of the following two lemmas are the application of the weight test \cite{BP}. 
\begin{lemma} \label{lem1} The relative presentation $\mathcal{P}= \langle G,t~|~ s(t)\rangle$ is aspherical if any one of the following holds:

\begin{enumerate}

\item $a=d^{-1}, a=g^{-1}, d=g;$

\item $a=d^{-1}, a=g^{-1}, d=g$ and $R\in \{cf, cf^{-1}, ci, ci^{-1}, fi, fi^{-1}, he^{-1}, hb^{-1}, eb^{-1}\};$

\item $a=d^{-1}, a=g^{-1}, d=g, h=e$ and $R\in \{cf, cf^{-1}, ci, ci^{-1}, fi, fi^{-1}\};$

\item $a=d^{-1}, a=g^{-1}, d=g, h=b$ and $R\in \{cf, cf^{-1}, ci, ci^{-1}, fi, fi^{-1}\};$

\item $a=d^{-1}, a=g^{-1}, d=g, e=b$ and $R\in \{cf, cf^{-1}, ci, ci^{-1}, fi, fi^{-1}\};$

\item $a=d^{-1}, a=g^{-1}, d=g, c=i^{-1}, c=f^{-1}, f=i;$

\item $a=d^{-1}, a=g^{-1}, d=g, c=i^{-1}, c=f, f=i^{-1};$

\item $a=d^{-1}, a=g^{-1}, d=g, c=i, c=f^{-1}, f=i^{-1};$

\item $a=d^{-1}, a=g^{-1}, d=g, c=i, c=f, f=i;$

\item $a=d^{-1}, a=g^{-1}, d=g, h=e, h=b, e=b;$

\item $a=d^{-1}, a=g^{-1}, d=g, c=i^{-1}, c=f^{-1}, f=i$ and $R\in\{he^{-1}, hb^{-1}\};$

\item $ a=d^{-1}, a=g^{-1}, d=g, c=i^{-1}, c=f, f=i^{-1}$ and $R\in \{he^{-1}, hb^{-1}, eb^{-1}\};$

\item $a=d^{-1}, a=g^{-1}, d=g, c=i, c=f^{-1}, f=i^{-1}$ and $R\in \{ he^{-1}, hb^{-1}, eb^{-1}\};$

\item $a=d^{-1}, a=g^{-1}, d=g, c=i, c=f, f=i$ and $R\in \{he^{-1}, hb^{-1}, eb^{-1}\};$

\item $a=d^{-1}, a=g^{-1}, d=g, h=e, h=b, e=b$ and $R\in \{cf, cf^{-1}, ci, ci^{-1}, fi, fi^{-1}\};$

\item $a=d^{-1}, a=g^{-1}, d=g, c=i^{-1}, c=f^{-1}, f=i, h=e, h=b, e=b;$

\item $a=d^{-1}, a=g^{-1}, d=g, c=i^{-1}, c=f, f=i^{-1}, h=e, h=b, e=b;$

\item $a=d^{-1}, a=g^{-1}, d=g, c=i, c=f^{-1}, f=i^{-1}, h=e, h=b, e=b;$

\item $a=d^{-1}, a=g^{-1}, d=g, c=i, c=f, f=i, h=e, h=b, e=b.$

\end{enumerate}
\end{lemma}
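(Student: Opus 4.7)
The plan is to prove all nineteen parts by the weight test of Bogley--Pride~\cite{BP}: in each case one exhibits a non-negative weight function $\theta$ on the nine corners of the single relator $s(t)$ such that (W1) $\sum_c \theta(c)\le|s(t)|-2=7$ and (W2) every admissible closed path in the star graph $\Gamma$ of Figure~\ref{1}(ii) has total weight at least $2$.

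The first step is to read off, from the list $S$ together with $\Gamma$, how each hypothesis of the lemma corresponds to an admissible length-$2$ cycle. These length-$2$ cycles split naturally into three families according to the sign pattern $+\,+\,-\,+\,+\,-\,+\,+\,-$ of $s(t)$: cycles among the corners $a,d,g$ (with labels $ad^{\pm1}, ag^{\pm1}, dg^{\pm1}$), cycles among $c,f,i$ (labels $cf^{\pm1}, ci^{\pm1}, fi^{\pm1}$), and cycles among $b,e,h$ (labels $he^{-1}, hb^{-1}, eb^{-1}$). Remark~1 records the internal combinatorics of each family: items~12--20 establish the triangular dependence (any two admissible cycles in a family force the third), while items~4--11 use torsion-freeness of $G$ to exclude conflicting coincidences. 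The nineteen cases are then naturally organised by which of these three triangles are activated, with case~(1) activating only the $\{a,d,g\}$-triangle and the level of activation increasing through the list until cases~(16)--(19) activate all three simultaneously.

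The second step is, for each case, to propose an explicit weight assignment and verify both conditions. In the lighter cases the three corners of the activated triangle are forced by (W2) to carry combined weight at least $3$, which pins much of the weight function; the residual budget from (W1) is then distributed among the remaining corners so as to meet (W2) on the finitely many admissible cycles of length $\ge 3$, all of which can be enumerated by composing cycles across the three families. As further triangles activate through the medium cases, additional corners become constrained and the residual budget tightens, but the enumeration of longer admissible cycles remains finite and the verification reduces to a mechanical check.

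The main obstacle is cases~(16)--(19), where all three triangles are simultaneously active and the naive pinning argument would force every one of the nine corners to carry weight at least~$1$, yielding combined weight $\ge 9$ against the (W1) budget of $7$. In these cases the proof must exploit the fact that simultaneous activation of all three triangles imposes strong relations among $a,c,d,f,g,i,b,e,h$, which collapse or identify several of the putatively distinct admissible closed paths in $\Gamma$ and so reduce the number of independent (W2) constraints; only once these reductions are accounted for does a feasible weight function fit within the budget. Once such a $\theta$ is produced in every case, the Bogley--Pride weight test delivers the asphericity of $\mathcal{P}$.
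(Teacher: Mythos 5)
Your plan to run the Bogley--Pride weight test directly on the star graph $\Gamma$ of $\mathcal{P}$ (nine corners, budget $\sum_c\theta(c)\le 7$) is not the paper's route, and in the decisive cases it cannot work. The nineteen hypotheses all contain $a=d^{-1},\,a=g^{-1},\,d=g$, and the paper exploits exactly this: it substitutes $d=g=a^{-1}$ (and $b=1$) into the relator, so $s(t)=atbtct^{-1}a^{-1}tetft^{-1}a^{-1}thtit^{-1}$, then makes the change of variable $x=t^{-1}att$ to pass to the two-relator presentation $\mathcal{Q}=\langle G,t,x \mid r_1,r_2\rangle$ with $r_1=xctx^{-1}etftx^{-1}hti$ and $r_2=x^{-1}t^{-1}att$, and applies the weight test to the star graph of $\mathcal{Q}$ (Figure \ref{case59}), where a $0$/$1$ weight function works because every cycle of weight less than $2$ has label a power of $a$ or of $i$ and $G$ is torsion free. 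Asphericity of $\mathcal{Q}$ then gives asphericity of $\mathcal{P}$. Your proposal contains no analogue of this change-of-variable step, which is the key idea of the proof.

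The gap is not merely stylistic: in your cases (16)--(19) the direct test on $\Gamma$ is provably infeasible, and the repair you sketch is unsound. When all three ``triangles'' are admissible, the nine length-$2$ cycles force $\theta_a+\theta_d\ge2$, $\theta_a+\theta_g\ge2$, $\theta_d+\theta_g\ge2$, and likewise for $\{c,f,i\}$ and $\{b,e,h\}$; since these three triples partition the nine corners, summing gives $\sum_c\theta(c)\ge 9>7$, contradicting (W1) outright. Your suggestion that the extra relations among $a,\dots,i$ ``collapse'' some of these constraints cannot help: the length-$2$ cycles are distinct closed paths in the fixed graph $\Gamma$, and imposing more coincidences in $G$ can only make additional cycles admissible, never remove an admissibility constraint already in force. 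So no weight function on $\Gamma$ exists in those cases, and a different device --- in this paper, the substitution $x=t^{-1}att$ producing $\mathcal{Q}$, whose relators are shorter and whose star graph has a workable weight function --- is indispensable. Even in the lighter cases (1)--(15) your feasibility claims are asserted rather than verified, whereas the paper's single transformed presentation handles all nineteen cases uniformly.
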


\begin{proof}
We prove the Lemma for the case  $a=d^{-1}, a=g^{-1}, d=g$. The proof of the remaining cases follows similarly.


\begin{figure}[H]
\centering
        \includegraphics[width=5cm]{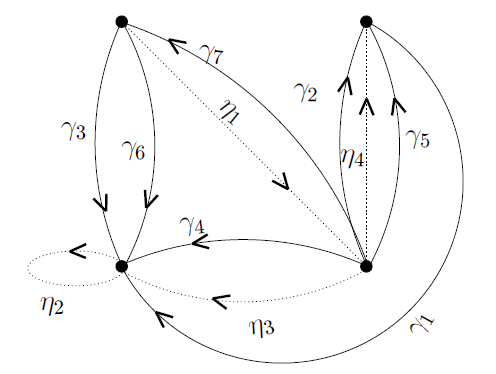}
        \caption{Star graph $\Gamma$}
    \label{case59}
\end{figure}
The relator is $s(t)=atbtct^{-1}a^{-1}tetft^{-1}a^{-1}thtit^{-1}$. We substitute $x=t^{-1}att$ to get $r_1 = xctx^{-1}etftx^{-1}hti$ and $r_2 = x^{-1}t^{-1}att$. 
We achieve the presentation $$\mathcal{Q} = \langle G, t, x~ |~r_1, r_2 \rangle.$$
The star graph $\Gamma$ for $\mathcal{Q}$ as shown in the Figure \ref{case59}, where $\gamma_1=c, \gamma_2=1, \gamma_3=e, \gamma_4=f, \gamma_5=1, \gamma_6=h, \gamma_7=i$ and $\eta_1=1, \eta_2=a, \eta_3=1, \eta_4=1$. We construct a weight function $\theta$ such that $\theta(\gamma_{1})=\theta(\gamma_{7})=\theta(\eta_{2})=\theta(\eta_{1})=0$ and we gave a weight $1$ to all other edges. Then $\sum (1- \theta(\gamma_i))=\sum (1- \theta(\eta_j))=2$ shows that the first condition of weight test is satisfied. Moreover, every cycle in the star graph having weight less than two has label $i^m$ or $a^m$, $(m \neq0)$ and $(i, a\neq1)$ and since $G$ is torsion free group so the second condition of weight test is satisfied. Furthermore, since $\theta$ assigns non-negative weights to each edge, so the third condition of weight test is obviously satisfied.

\end{proof}

\begin{lemma} \label{lem2} The relative presentation $\mathcal{P}= \langle G,t~|~ s(t)\rangle$ is aspherical if any one of the following holds:

\begin{enumerate}

\item $a=d^{-1}, a=g, d=g^{-1};$

\item $a=d^{-1}, a=g, d=g^{-1}$ and $R\in \{cf, cf^{-1}, ci, ci^{-1}, fi, fi^{-1}, he^{-1}, hb^{-1}, eb^{-1}\};$

\item $a=d^{-1}, a=g, d=g^{-1}, h=e$ and $R\in \{cf, cf^{-1}, ci, ci^{-1}, fi, fi^{-1}\};$

\item $a=d^{-1}, a=g, d=g^{-1}, h=b$ and $R\in \{cf, cf^{-1}, ci, ci^{-1}, fi, fi^{-1}\};$

\item $a=d^{-1}, a=g, d=g^{-1}, e=b$ and $R\in \{cf, cf^{-1}, ci, ci^{-1}, fi, fi^{-1}\};$

\item $a=d^{-1}, a=g, d=g^{-1}, c=i^{-1}, c=f, f=i^{-1};$

\item $a=d^{-1}, a=g, d=g^{-1}, c=i, c=f^{-1}, f=i^{-1};$

\item $a=d^{-1}, a=g, d=g^{-1}, c=i, c=f, f=i;$

\item $a=d^{-1}, a=g, d=g^{-1}, h=e, h=b, e=b;$

\item $a=d^{-1}, a=g, d=g^{-1}, c=i^{-1}, c=f, f=i^{-1}$ and $R\in \{he^{-1}, hb^{-1}\};$

\item $a=d^{-1}, a=g, d=g^{-1}, c=i, c=f^{-1}, f=i^{-1}$ and $R\in \{he^{-1}, hb^{-1}, eb^{-1}\};$

\item $a=d^{-1}, a=g, d=g^{-1}, c=i, c=f, f=i$ and  $R\{he^{-1}, hb^{-1}, eb^{-1}\};$

\item $a=d^{-1}, a=g, d=g^{-1}, h=e, h=b, e=b$ and $R\in \{cf, cf^{-1}, ci, ci^{-1}, fi, fi^{-1}\};$

\item $a=d^{-1}, a=g, d=g^{-1}, c=i^{-1}, c=f, f=i^{-1}, h=e, h=b, e=b;$

\item $a=d^{-1}, a=g, d=g^{-1}, c=i, c=f^{-1}, f=i^{-1}, h=e, h=b, e=b;$

\item $a=d^{-1}, a=g, d=g^{-1}, c=i, c=f, f=i, h=e, h=b, e=b;$

\item $a=d, a=g^{-1}, d=g^{-1};$

\item $a=d, a=g^{-1}, d=g^{-1}$ and $R\in \{cf, cf^{-1}, ci, ci^{-1}, fi, fi^{-1}, he^{-1}, hb^{-1}, eb^{-1}\};$

\item $a=d, a=g^{-1}, d=g^{-1}, h=e$ and $R\in \{cf, cf^{-1}, ci, ci^{-1}, fi, fi^{-1}\};$

\item $a=d, a=g^{-1}, d=g^{-1}, h=b$ and $R\in \{cf, cf^{-1}, ci, ci^{-1}, fi, fi^{-1}\};$

\item $a=d, a=g^{-1}, d=g^{-1}, e=b$ and $R\in \{cf, cf^{-1}, ci, ci^{-1}, fi, fi^{-1}\};$

\item $a=d, a=g^{-1}, d=g^{-1}, c=i, c=f^{-1}, f=i^{-1};$

\item $a=d, a=g^{-1}, d=g^{-1}, c=i, c=f, f=i;$

\item $a=d, a=g^{-1}, d=g^{-1}, h=e, h=b, e=b;$

\item $a=d, a=g^{-1}, d=g^{-1}, c=i, c=f^{-1}, f=i^{-1}$ and $R\in \{he^{-1}, hb^{-1}\};$

\item $a=d, a=g^{-1}, d=g^{-1}, c=i, c=f, f=i$ and $R\in \{he^{-1}, hb^{-1}, eb^{-1}\};$

\item $a=d, a=g^{-1}, d=g^{-1}, h=e, h=b, e=b$ and $R\in \{cf, cf^{-1}, ci, ci^{-1}, fi, fi^{-1}\};$

\item $a=d, a=g^{-1}, d=g^{-1}, c=i, c=f^{-1}, f=i^{-1}, h=e, h=b, e=b;$

\item $a=d, a=g^{-1}, d=g^{-1}, c=i, c=f, f=i, h=e, h=b, e=b.$

\end{enumerate}
\end{lemma}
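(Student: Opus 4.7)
The plan is to follow the same template as Lemma \ref{lem1}: for each of the two base configurations, $a=d^{-1}, a=g, d=g^{-1}$ (items 1--16) and $a=d, a=g^{-1}, d=g^{-1}$ (items 17--29), apply a change of variable and then the weight test. Under the first configuration the identifications reduce to $d=a^{-1}$ and $g=a$, so the relator takes the form $s(t) = atbtct^{-1}a^{-1}tetft^{-1}athtit^{-1}$. The substitution $x = t^{-1}att$, identical to the one used in Lemma \ref{lem1}, converts $\mathcal{P}$ into a two-relator presentation $\mathcal{Q} = \langle G, t, x \mid r_1, r_2 \rangle$ with $r_2 = x^{-1}t^{-1}att$ and $r_1$ obtained from $s(t)$ by absorbing the $\pm a$ syllables into $x^{\pm 1}$.

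For this $\mathcal{Q}$ I would construct the star graph $\Gamma$ and, in parallel with Lemma \ref{lem1}, assign weight $0$ to the edges carrying the labels $a$ and $i$ (together with two trivially-labelled edges, so as to meet the sum condition) and weight $1$ to the remaining edges. Then $\sum(1-\theta) = 2$ for each relator; every closed path in $\Gamma$ of total weight below $2$ has label of the form $a^m$ or $i^m$ with $m\ne 0$, which is nontrivial by torsion-freeness of $G$; and non-negativity of $\theta$ is immediate. The three weight-test conditions are therefore satisfied, so $\mathcal{Q}$, hence $\mathcal{P}$, is aspherical. The second base configuration $a=d, a=g^{-1}, d=g^{-1}$ (i.e.\ $d=a$, $g=a^{-1}$) is treated symmetrically by the analogous substitution, yielding a star graph with the same combinatorial structure and an identically-structured weight function.

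For the dependent subcases, in which an additional admissible label $R$ or an extra identification triple from $\{c,f,i\}$ or $\{h,e,b\}$ is imposed, the same substitution continues to apply. The additional identifications introduce at most one or two new short cycles in $\Gamma$; each such cycle either carries a label of the form $y^m$ for some nontrivial torsion-free element $y$, or can be killed by zeroing one further edge of $\Gamma$ whose label was unused in the base weighting, while preserving $\sum(1-\theta)=2$ for both relators. The principal obstacle is the combinatorial bookkeeping across the 29 items: for each subcase one must check that the set of edges made short by the added identifications admits a consistent zero-weight assignment, and that every remaining short cycle still carries a nontrivially-labelled word. This verification is mechanical and directly parallels the representative case in Lemma \ref{lem1}, so a single explicit computation together with the symmetric statement for the second base configuration should be enough to cover the entire list.
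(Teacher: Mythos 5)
Your overall strategy --- a change of variable followed by the weight test, with one representative computation per base configuration and the remaining items declared similar --- is exactly the strategy of the paper. The divergence is in the representative computation itself, and that is also where your proposal is not actually carried out. In the configuration $a=d^{-1}$, $a=g$, $d=g^{-1}$ the relator contains the syllables $t^{-1}a^{-1}t$ and $t^{-1}at$, i.e.\ the two coefficient occurrences of $a$ have opposite signs; for this reason the paper does not reuse the Lemma~\ref{lem1} substitution $x=t^{-1}att$, but instead uses the conjugation $x=t^{-1}at$, which absorbs both occurrences cleanly and yields $r_1=xtcx^{-1}etfxhti$ and $r_2=x^{-1}t^{-1}at$. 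With your substitution $x=t^{-1}att$ the syllable $t^{-1}at$ rewrites as $xt^{-1}$, not as $x^{\pm1}$, so your $r_1$ carries an extra stray $t^{-1}$ and has a different syllable structure from the one you implicitly assume when you transplant the Lemma~\ref{lem1} weighting.

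More importantly, the decisive step of the weight test --- constructing the star graph of the transformed presentation and checking that every closed path of weight less than $2$ has a label that is non-trivial in $G$ --- is asserted rather than performed. Your proposed zero-set (the edges labelled $a$ and $i$ together with two trivially labelled edges) is copied from Lemma~\ref{lem1}; in the paper's computation for this configuration the zero-weight edges are those labelled $c$, $e$, $a$ and one trivial edge, and the short cycles have labels $c^m$ and $a^m$, not $i^m$ and $a^m$. Which edges may be given weight $0$ without creating a trivially labelled short cycle depends on the actual incidence structure of the star graph, so the choice must be exhibited and verified for the presentation you actually obtain, not inferred by analogy. The same applies to your treatment of the subcases with additional admissible labels: the claim that each new short cycle ``either carries $y^m$ or can be killed by zeroing one further edge'' while keeping $\sum(1-\theta)\ge 2$ and condition two intact is precisely what would need to be checked. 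So the architecture matches the paper, but as written the proposal omits the verification that constitutes the proof of the representative case.
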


\begin{proof}
We prove the Lemma for the case  $a=d^{-1}, a=g, d=g^{-1}$. The proof of the remaining
cases follows similarly.


\begin{figure}[H]
\centering
        \includegraphics[width=5cm]{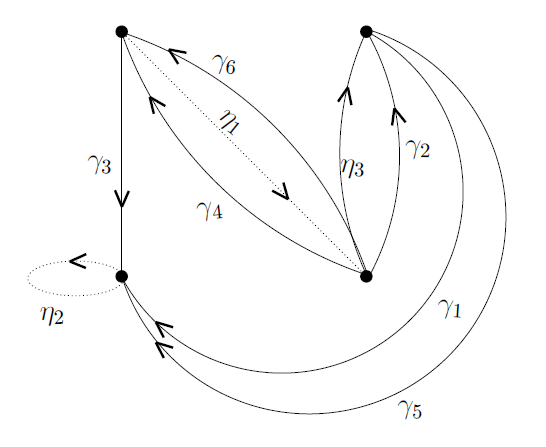}
        \caption{Star graph $\Gamma$}
    \label{case60}
\end{figure}

The relator is $s(t) = atbtct^{-1}a^{-1}tetft^{-1}athtit^{-1}$. We substitute $x = t^{-1}at$ to get $r_1 = xtcx^{-1}etfxhti$ and $r_2 = x^{-1}t^{-1}at$. We achieve the presentation $$\mathcal{Q} = \langle G, t, x~ |~r_1, r_2 \rangle.$$
The star graph for $\mathcal{Q}$ as shown in the Figure \ref{case60} where, $\gamma_1=1, \gamma_2=c, \gamma_3=e, \gamma_4=f, \gamma_5=h, \gamma_6=i$ and $\eta_1=1, \eta_2=a, \eta_3=1$. We construct a weight function $\theta$ such that $\theta(\gamma_{3})=\theta(\gamma_{2})=\theta(\eta_{3})=\theta(\eta_{2})=0$ and we gave a weight $1$ to all other edges. Then $\sum (1- \theta(\gamma_i))=\sum (1- \theta(\eta_j))=2$ shows that the first condition of the weight test is satisfied. Every cycle in the star graph having a weight less than two has label $c^m$ or $a^m$, $(m\neq0)$ and $(c, a\neq1)$,  and since $G$ is torsion free group so the second condition of the weight test is satisfied. Furthermore, since $\theta$ assigns non-negative weights to each edge, so the third condition of the weight test is satisfied.

\end{proof}

 From now onward, the label and the degree of a vertex $v$ of region $\Delta$ will be denoted by $l_{\Delta}(v)$ and $d_{\Delta}(v)$ respectively. Furthermore, $l_{\Delta}\in\{ww_1,\dots,ww_k\}$ will be indicated by $l_{\Delta}(v)= \{ww_1,\dots,ww_k\}$. The forthcoming lemmas are proved by using  curvature distribution \cite{E3}
\begin{lemma} \label{lem3} The relative presentation $\mathcal{P}= \langle G,t~|~ s(t)\rangle$ is aspherical if any one of the following holds:
\begin{enumerate}

\item $a=d^{-1}$;

\item $a=d$;

\item $d=g^{-1}$;

\item $d=g$;

\item $a=g^{-1}$;

\item $a=g$;

\item $h=e$;

\item $h=b$.
\end{enumerate}
\end{lemma}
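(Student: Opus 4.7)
My plan is to apply the curvature distribution method to each of the eight single-relation cases in Lemma \ref{lem3}. The equivalences $a\leftrightarrow c^{-1}$, $d\leftrightarrow i^{-1}$, $e\leftrightarrow h^{-1}$, $f\leftrightarrow g^{-1}$, $b\leftrightarrow b^{-1}$ (combined with cyclic permutation, inversion, and $t\leftrightarrow t^{-1}$) collapse the eight cases into two essentially distinct configurations, so I would treat one representative case in full detail, say $a=d^{-1}$, and leave the remaining ones to symmetry.

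For the representative case $a=d^{-1}$, the first step is to read off from Figure \ref{1}(ii) which of the length-$2$ cycles in $S$ remain admissible. Since no further coincidence is imposed, only the cycle $ad$ has length $2$ in $\Gamma$; every other element of $S$ yields a cycle of length at least $3$ because the requisite $G$-relation does not hold. Consequently, at each corner $v$ of a region $\Delta$ either $l_{\Delta}(v)=ad$ with $d_{\Delta}(v)=2$, or $d_{\Delta}(v)\geq 3$. The second step is a combinatorial inspection of the relator $atbtct^{-1}dtetft^{-1}gthtit^{-1}$: the label $ad$ can only appear at corners coming from the subwords $t^{-1}at$ and $t^{-1}dt$, and tracing around a region in $\Gamma$ bounds the number $k$ of length-$2$ corners a single region can host.

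Once $k$ is controlled, items (1)--(3) of the Remark immediately yield $c(\Delta)\leq 0$ whenever $k\leq 3$, so only regions with $k\geq 4$ need further attention. For each such region I would apply the distribution rule from \cite{BE}: assign to $\Delta$ a uniquely determined neighbouring region $\widehat{\Delta}$ across a boundary edge incident to a length-$2$ corner, and argue from torsion-freeness of $G$ together with the hypothesis that no other element of $S$ is admissible that $\widehat{\Delta}$ must itself carry sufficiently many corners of degree $\geq 3$ to ensure $c(\Delta)+c(\widehat{\Delta})\leq 0$. Gauss--Bonnet then forces $\sum c(\Delta)\leq 0$ on any reduced spherical diagram $K$, contradicting sphericity and establishing asphericity of $\mathcal{P}$.

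The principal obstacle will be verifying that the assignment $\Delta\mapsto\widehat{\Delta}$ is globally injective and pinning down the local structure of clusters of $ad$-corners. This requires a short enumeration of the possible boundary words around such clusters, eliminating each configuration either combinatorially (by incompatible labels on shared edges) or by extracting a group-theoretic identity incompatible with $a,d\in G\setminus\{1\}$ or with torsion-freeness of $G$. The subcases $h=e$ and $h=b$ introduce an extra bookkeeping burden, because the coefficient $b$ equals $1$ after the change of variable, so the star-graph admissibility analysis must keep track separately of vertices whose labels already contain a trivial coefficient.
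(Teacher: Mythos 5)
Your overall framework (pass to the dual picture, bound corner degrees via the star graph, invoke the curvature formulas of the Remark) is the same as the paper's, but you have missed the one observation that makes these eight cases trivial, and as a result you have structured the proof around a contingency that never occurs. In each case of the lemma exactly one label from $S$ is admissible, and that label is a product of two \emph{distinct} coefficient letters (e.g.\ $ad$ for the case $a=d^{-1}$, $he^{-1}$ for $h=e$). Since each of the nine letters $a,b,c,d,e,f,g,h,i$ occurs exactly once as a corner label of the region $\Delta$, a degree-two vertex of $\Delta$ can only sit at one of the two corners named in that unique admissible label. Hence $k\leq 2$ always: at most two vertices of $\Delta$ have degree two and the remaining seven have degree at least three, so item (2) of the Remark gives $c(\Delta)\leq c(2,2,3,3,3,3,3,3,3)=-\pi/3$ for \emph{every} region. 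The total curvature is then negative, contradicting the Gauss--Bonnet value $4\pi$ for a reduced spherical diagram, and asphericity follows. This is exactly the paper's proof.

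The gap in your write-up is that you never actually establish the bound on $k$; you assert that "tracing around a region bounds $k$" and then devote the substantive part of your argument to regions with $k\geq 4$, a curvature-transfer map $\Delta\mapsto\widehat{\Delta}$, and its injectivity. Had you carried out the trace you would have found $k\leq 2$, rendering all of that machinery vacuous; as written, the part of your proof that carries the burden (the $k\geq 4$ analysis and the cluster enumeration) is only sketched, so the argument is incomplete precisely where it claims to do work. Two smaller points: the symmetry $a\leftrightarrow c^{-1}$, $d\leftrightarrow i^{-1}$, $f\leftrightarrow g^{-1}$, $e\leftrightarrow h^{-1}$ interchanges the $\{a,d,g\}$-type cases with the $\{c,f,i\}$-type cases (which is why the latter are absent from the lemma) rather than collapsing the eight listed cases to two, though this is harmless since the argument is uniform; and your worry about $b=1$ in the cases $h=e$ and $h=b$ is also harmless, since the relevant corners are still just $v_h,v_e$ (resp.\ $v_h,v_b$) and the count $k\leq 2$ is unaffected.
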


\begin{proof}

\begin{figure}[H]
\centering
        \includegraphics[width=9cm]{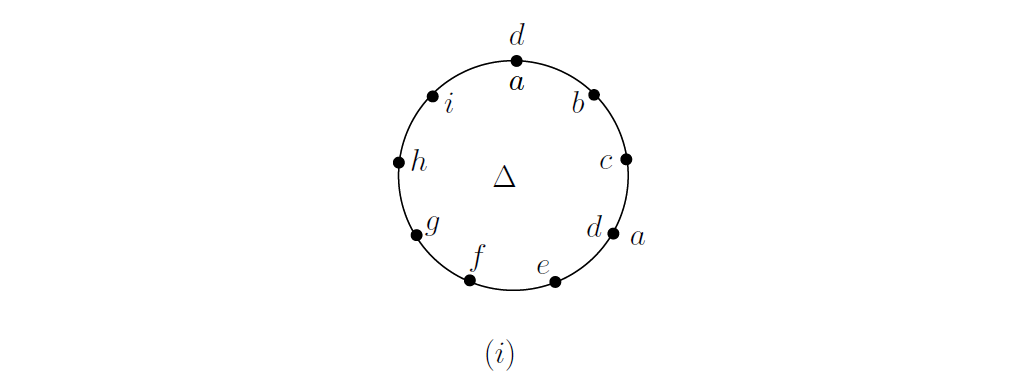}
        \caption{Region $\Delta$ }
    \label{lemm3}
\end{figure}

In all the cases, given in this Lemma there always exist only two vertices that have degree two and all other vertices have the degree at least three. So $c(\Delta)\leq -\frac{\pi}{3}.$ For demonstration, we consider the case $a=d^{-1}$.

In this case, only the vertices $v_a$ and $v_d$ have degree two and all other vertices have thee degree of at least three as shown in the Figure \ref{lemm3}(i). So $c(\Delta)\leq -\frac{\pi}{3}$, as required.

\end{proof}

\begin{lemma} \label{lem4} The relative presentation $\mathcal{P}= \langle G,t~|~ s(t)\rangle$ is aspherical if any one of the following holds:
\begin{enumerate}
\item $a=d^{-1}, c=f^{-1};$
\item  $a=d^{-1}$ and $R\in \{cf^{-1}, ci, ci^{-1}, fi, fi^{-1}, he^{-1}, hb^{-1}, eb^{-1}\};$

\item  $a=d$ and $R\in \{cf, cf^{-1}, fi, fi^{-1}, he^{-1}, hb^{-1}\};$

\item  $a=g^{-1}$ and $R\in \{cf, cf^{-1}, fi, fi^{-1}, he^{-1}, hb^{-1}, eb^{-1}\};$

\item $a=g$ and $R\in \{cf^{-1}, fi, he^{-1}, eb^{-1}\};$

\item  $d=g^{-1}$ and $R\in \{fi, fi^{-1}, he^{-1}, hb^{-1}, eb^{-1}\};$

\item $d=g$ $R\in \{fi^{-1}, hb^{-1}, eb^{-1}\};$

\item $a=d^{-1}, c=f^{-1}, e=b;$

\item $a=d^{-1}, c=i^{-1}, h=b;$

\item $a=d^{-1}, c=i, e=b;$

\item $a=g^{-1}, f=i^{-1}, h=e;$

\item $a=g^{-1}, f=i^{-1}, h=b;$

\item $a=g^{-1}, f=i, h=b;$

\item $a=g, f=i^{-1}, h=e;$

\item $d=g^{-1}, f=i^{-1}, h=e;$

\item $a=d, a=g, d=g;$

\item $h=e, h=b, e=b;$

\item $a=d, a=g, d=g$ and $R\in \{cf, ci, fi\};$

\item $h=e, h=b, e=b$ and $R\in \{ad, ag, dg\}.$
\end{enumerate}

\end{lemma}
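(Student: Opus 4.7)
The plan is to apply the curvature distribution method outlined in the Methodology, following the strategy used in Lemma \ref{lem3}. For each of the nineteen cases I would first determine which length-$2$ cycles in the star graph $\Gamma$ are admissible under the stated hypotheses, invoking Remark items (4)--(9) to ensure that no pair of opposite identifications coexists (which would force torsion in $G$) and items (12)--(20) to confirm triple-closure wherever invoked. I would then bound the curvature of an arbitrary region $\Delta$ of a reduced spherical picture $K$ over $\mathcal{P}$ using $c(\Delta) = \sum_{i=1}^{9} 2\pi/d_\Delta(v_i) - 7\pi$, taking into account which of $v_a, v_b, v_c, v_d, v_e, v_f, v_g, v_h, v_i$ can be forced to have degree $2$ by the hypotheses.

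For cases in which the admissible identifications force at most three vertices of $\Delta$ to have degree $2$ (the rest being of degree at least $3$), Remark (3) immediately gives $c(\Delta) \leq 0$, so $\sum_\Delta c(\Delta) \leq 0$, contradicting the spherical sum $4\pi$. A representative case such as (1) can be written out in full: with $a=d^{-1}$ the vertices $v_a, v_d$ potentially have degree $2$ and with $c=f^{-1}$ the vertices $v_c, v_f$ potentially have degree $2$, but the alternation pattern of $t^{\pm 1}$ in $s(t)=atbtct^{-1}dtetft^{-1}gthtit^{-1}$ together with the local structure around the relevant corners prevents all four from being realised in a single region while keeping the remaining vertices of degree $\geq 3$. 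Most of the other cases in the list follow by the same enumeration, leveraging the equivalences $a \leftrightarrow c^{-1}$, $d \leftrightarrow i^{-1}$, $e \leftrightarrow h^{-1}$, $f \leftrightarrow g^{-1}$, $b \leftrightarrow b^{-1}$ to collapse the combinatorics substantially.

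The main obstacle will arise in the triple-identification cases (16)--(19), where four or more degree-$2$ vertices may co-occur and $c(\Delta)$ could reach $\pi/3$. For each such positively curved region I would designate a uniquely associated neighbouring region $\widehat{\Delta}$ across an edge incident to a degree-$2$ vertex, and verify $c(\widehat{\Delta}) + c(\Delta) \leq 0$ by computing the degrees that the cyclic word $s(t)$ forces on the vertices of $\widehat{\Delta}$ on its shared side. The delicate point is the well-definedness and injectivity of the assignment $\Delta \mapsto \widehat{\Delta}$ across all positively curved regions of $K$ simultaneously; once verified, summing the redistributed curvatures gives $\sum_\Delta c(\Delta) \leq 0$, contradicting $4\pi$ and establishing asphericity of $\mathcal{P}$.
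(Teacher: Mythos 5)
Your overall framework (curvature of the regions of a reduced spherical picture, the bounds of Remark items (1)--(3), contradiction with the total curvature $4\pi$) is the same as the paper's, but the decisive content is missing. The paper's proof of Lemma \ref{lem4} consists precisely of the case-by-case verification that under every one of the nineteen hypotheses at most three vertices of a region $\Delta$ can have degree two while all the others have degree at least three, so that $c(\Delta)\leq 0$ outright and no curvature is ever transferred; for instance, in case (1) it shows that $v_c$ and $v_d$ cannot both have degree two, leaving only the two configurations $d_{\Delta}(v_a)=d_{\Delta}(v_c)=d_{\Delta}(v_f)=2$ and $d_{\Delta}(v_a)=d_{\Delta}(v_d)=d_{\Delta}(v_f)=2$, each with $c(\Delta)\leq 0$. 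Your sketch asserts this exclusion for case (1) by appeal to the alternation pattern of $t^{\pm1}$ without any actual argument, and for the remaining cases only promises an enumeration; but this verification is the entire proof, and it is delicate: the Theorem leaves, e.g., $a=d$, $a=g$, $d=g$ with $R=cf^{-1}$ as an open exceptional case while Lemma \ref{lem4}(18) covers $R=cf$, so the conclusion genuinely depends on which corner labels can co-occur at adjacent vertices, a distinction a generic enumerate-and-bound plan does not capture.

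Your treatment of cases (16)--(19) also goes astray. In cases (16) and (17) the admissible degree-two labels involve only the corners $v_a,v_d,v_g$ (respectively $v_b,v_e,v_h$), so at most three vertices of $\Delta$ can have degree two at all and $c(\Delta)\leq 0$ follows immediately; your claim that four or more degree-two vertices may co-occur there, with $c(\Delta)$ reaching $\pi/3$, is simply false. In cases (18) and (19) there are five candidate corners, but the paper again shows that at most three can simultaneously have degree two, so curvature redistribution is not needed anywhere in this lemma (that machinery only enters in Lemmas \ref{lem5}--\ref{lem7}). If you nevertheless route these cases through a pairing $\Delta\mapsto\widehat{\Delta}$, you must actually construct the associated regions, prove $c(\Delta)+c(\widehat{\Delta})\leq 0$ from the forced vertex degrees, and establish the uniqueness of the association -- all of which your proposal explicitly leaves open, so as it stands the argument is incomplete precisely where the work lies.
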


\begin{proof}


\begin{figure}[H]
\centering
        \includegraphics[width=9cm]{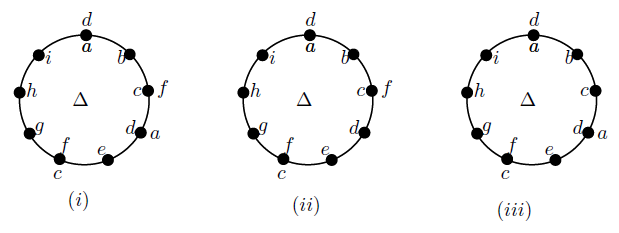}
        \caption{Region $\Delta$ }
    \label{case}
\end{figure} 

In all the cases, given in this Lemma there always exist three vertices that have degree two and all other vertices have the degree at least three. So $c(\Delta)\leq 0$. For demonstration, we consider the case $a=d^{-1} $ and $ c=f^{-1}$.

In this case, the degree of vertices $v_c$ and $v_d$ cannot be two together as shown in the Figure \ref{case}(i). So there arise two cases as shown in Figure \ref{case}(ii) and Figure \ref{case}(iii). 
\begin{enumerate}
\item $d_{\Delta}(v_a)=d_{\Delta}(v_c)=d_{\Delta}(v_f)=2;$

\item $d_{\Delta}(v_a)=d_{\Delta}(v_d)=d_{\Delta}(v_f)=2.$
\end{enumerate}

In both cases $c(\Delta)\leq 0$, as required.
\end{proof}

\begin{lemma} \label{lem5} The relative presentation $\mathcal{P}= \langle G,t~|~ s(t)\rangle$ is aspherical if any one of the following is holds:

\begin{enumerate}

\item $a=g, f=i^{-1}, e=b;$

\item $a=d^{-1}, c=i^{-1}, h=e;$

\item $a=d, f=i^{-1}, h=e;$

\item $a=d, f=i, h=e;$

\item $a=g^{-1}, c=f^{-1}, h=b;$

\item $a=d^{-1}, c=i, h=e;$

\item $a=g^{-1}, c=f, h=b;$

\item $a=d^{-1}, f=i^{-1}$ and $R\in \{he^{-1}, eb^{-1}\};$

\item $a=d^{-1}, f=i, e=b;$

\item $a=g^{-1}, f=i^{-1}, e=b;$

\item $a=g^{-1}, f=i, e=b.$

\end{enumerate}

\end{lemma}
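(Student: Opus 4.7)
The plan is to apply the curvature distribution method exactly as in Lemma \ref{lem4}. In each of the eleven cases, the hypotheses impose three admissibility equations: items (1)--(7) and (9)--(11) list three explicit equations, while item (8) combines two explicit equations with a choice of $R$ that supplies a third. Each such equation yields potential degree-two vertices on the region $\Delta$ of Figure \ref{1}(i), and by Remark item 3, once we confirm that at most three vertices of $\Delta$ simultaneously attain degree two we obtain $c(\Delta)\leq d(2,2,2,3,3,3,3,3,3)=0$, whence asphericity follows from the curvature distribution principle recalled in the Methodology section.

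I would proceed case by case, taking a representative such as $a=g$, $f=i^{-1}$, $e=b$ from item (1). The candidate degree-two vertices lie among $\{v_a,v_g\}\cup\{v_f,v_i\}\cup\{v_e,v_b\}$, giving a priori up to six such vertices. I would then consult the label sequence of $s(t)$ to detect geometric incompatibilities: if two candidate vertices are adjacent along the boundary of $\Delta$, they typically cannot both have degree two, exactly as happened with $v_c,v_d$ in the representative case of Lemma \ref{lem4}. This forces a branching into a small number of sub-configurations, each exhibiting exactly three degree-two vertices, and so the bound $c(\Delta)\leq 0$ applies in every sub-configuration.

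Before closing each case I would also verify that no fourth admissible length-two cycle is silently forced. Remark items (12)--(20) make this step routine: any additional forced equality among $a,c,d,f,g,i$ or among $b,e,h$ must either already appear in the hypothesis list or else produce a torsion element in $G$, contradicting the standing hypothesis that $G$ is torsion free. This is precisely where the bookkeeping is delicate, and I expect it to be the main obstacle: one must check across all eleven cases that no hidden admissibility inflates the count of degree-two vertices to four and tips the estimate into positive curvature.

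Finally, mimicking the format of Lemma \ref{lem4}, I would present one representative case in detail, drawing $\Delta$ with the forced degree-two vertices in each geometric sub-configuration, and state that the remaining cases are handled by the same routine enumeration. Since no case in the list appears to produce $c(\Delta)=0$ with a neighbouring region still eligible to receive curvature, I do not anticipate needing any curvature transfer beyond the inequality $c(\Delta)\leq 0$ itself, and hence asphericity of $\mathcal{P}$ follows in every listed case.
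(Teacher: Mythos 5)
Your central claim --- that in every case of this lemma at most three vertices of $\Delta$ can simultaneously have degree two, so that $c(\Delta)\leq 0$ and no curvature transfer is needed --- is exactly where the argument breaks down. Take your own representative case $a=g$, $f=i^{-1}$, $e=b$. The three admissible length-two labels $ag^{-1}$, $fi$, $eb^{-1}$ make all six of $v_a,v_g,v_f,v_i,v_e,v_b$ candidates for degree two, and the geometry of $\Delta$ does \emph{not} cut this down to three: the configuration $d_{\Delta}(v_i)=d_{\Delta}(v_b)=d_{\Delta}(v_e)=d_{\Delta}(v_g)=2$ is realizable, and four degree-two vertices already give $c(\Delta)\leq c(2,2,2,2,3,3,3,3,3)=\pi/3>0$. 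The paper's proof of this case has to work much harder: it shows the adjacent vertex $v_a$ is forced to have degree greater than three, analyses the possible labels of $v_f,v_c,v_d$ to eliminate most branches by torsion-freeness, and is still left with a surviving configuration in which $c(\Delta)\leq \pi/6>0$. Asphericity is then obtained only by pairing $\Delta$ with a uniquely associated neighbouring region $\widehat{\Delta}$ (Figure \ref{casef1}(iii)) for which $c(\widehat{\Delta})\leq c(2,2,2,3,3,3,3,3,4)=-\pi/6$, giving $c(\Delta)+c(\widehat{\Delta})\leq 0$. In other words, the cases of this lemma are precisely the ones that were \emph{not} absorbed into Lemma \ref{lem4}, because they admit regions of positive curvature; declaring in advance that ``no curvature transfer is needed'' assumes away the entire difficulty.

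A secondary point: your appeal to Remark items (12)--(20) to rule out a ``hidden'' fourth admissible length-two cycle does not help here, since the fourth degree-two vertex in the troublesome configuration does not come from a new admissible equality at all --- it comes from the same three equalities being realized at both of their corner positions (e.g.\ both $v_e$ and $v_b$ realizing $eb^{-1}=1$). So the bookkeeping you flag as the main obstacle is aimed at the wrong phenomenon. A correct proof along the paper's lines must (i) enumerate the configurations with four degree-two vertices, (ii) bound $c(\Delta)$ from above (here $\pi/6$ after label analysis), and (iii) exhibit the neighbouring region $\widehat{\Delta}$ receiving the positive curvature and verify $c(\widehat{\Delta})$ is negative enough --- none of which appears in your outline.
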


\begin{proof}
We prove the Lemma for the case  $a=g, f=i^{-1}, e=b$,. The proof of the remaining cases follows similarly.


\begin{figure}[H]
\centering
        \includegraphics[width=11cm]{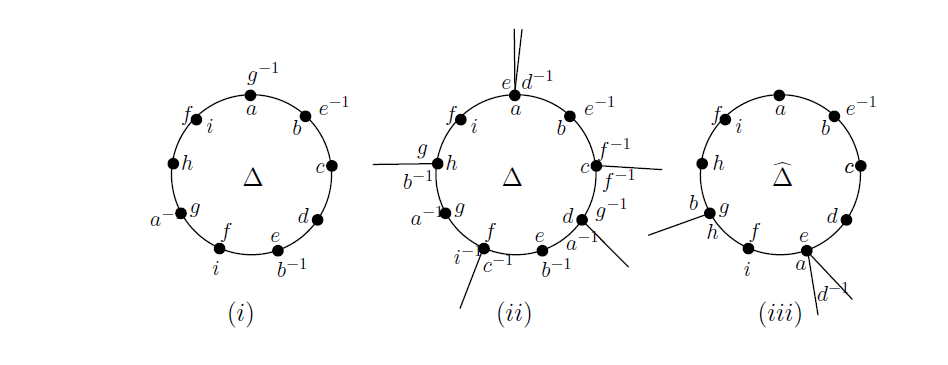}
        \caption{Regions $\Delta$ and $\widehat{\Delta}$}
    \label{casef1}
\end{figure} 

Here $l_{\Delta}(v_b)=be^{-1}, l_{\Delta}(v_g)=ga^{-1}, l_{\Delta}(v_e)=eb^{-1}, l_{\Delta}(v_f)=fi, l_{\Delta}(v_a)=ag^{-1}$ and $l_{\Delta}(v_i)=if$ as shown in the Figure \ref{casef1}(i). We consider the following case:
\begin{enumerate}

\item $d_{\Delta}(v_i)=d_{\Delta}(v_b)=d_{\Delta}(v_e)=d_{\Delta}(v_g)=2.$

\end{enumerate}

\begin{enumerate}

\item Here $l_{\Delta}(v_b)=be^{-1}$ and $l_{\Delta}(v_i)=if$ which implies $l_{\Delta}(v_a)=ead^{-1}w$, where $w\in\{h^{-1}, b^{-1}, e^{-1}\}$ so $d_{\Delta}(v_a)>3$ as shown in the Figure \ref{casef1}(ii). 

Assume $d_{\Delta}(v_f)=d_{\Delta}(v_c)=d_{\Delta}(v_d)=3$. Then $l_{\Delta}(v_f)=c^{-1}fi^{-1}$ which implies
$l_{\Delta}(v_c)=f^{-1}c\{c, f^{-1}, i, i^{-1}\}$. But $l_{\Delta}(v_c)=f^{-1}c\{i^{-1}, c\}$ implies $f^{3}=1$ or $c=1$, a contradiction. If $l_{\Delta}(v_c)=f^{-1}ci$ which forces $l_{\Delta}(v_d)=da^{-1}hw$, where $w\in \{h^{-1}, e^{-1}, b^{-1}\}$ so $d_{\Delta}(v_d)>3$, a contradiction. If $l_{\Delta}(v_c)=cf^{-2}$ then $l_{\Delta}(v_d)=da^{-1}g^{-1}.$ So curvature of $c(\Delta)\leq \frac{\pi}{6}.$ Add $c(\Delta)$ to $c(\widehat{\Delta})$ is given in the Figure \ref{casef1}(iii). Observe that in $\widehat{\Delta}$. Here, $d_{\widehat{\Delta}}(v_b)=d_{\widehat{\Delta}}(v_i)=d_{\widehat{\Delta}}(v_f)=2, d_{\widehat{\Delta}}(v_e)=4$ and all other vertices have degree atleast $3$. So $c(\widehat{\Delta)}\leq c(2,2,2,3,3,3,3,3,4)=-\frac{\pi}{6}.$ So $c(\Delta) + c(\widehat{\Delta}) \leq 0.$

\end{enumerate}
\end{proof}

\begin{lemma} \label{lem6} The relative presentation $\mathcal{P}= \langle G,t~|~ s(t)\rangle$ is aspherical if any one of the following holds:

\begin{enumerate}

\item $a=g^{-1}, c=f^{-1}, h=e;$

\item $a=d^{-1}, c=f, h=e;$

\item $a=d^{-1}, c=f^{-1}$ and $R\in \{he^{-1},hb^{-1}\};$

\item $a=d^{-1}, c=f, h=b;$

\item $a=d^{-1}, f=i^{-1}, h=b;$

\item $a=d, c=f^{-1}$ and $\{hb^{-1}, he^{-1}\};$

\item $a=d, f=i^{-1}, h=b;$

\item $a=g^{-1}, c=f, h=e;$

\item $a=g, c=f, h=e;$

\item $d=g, f=i, h=b;$

\item $d=g^{-1}, f=i^{-1}, h=b;$

\item $d=g^{-1}, f=i$ and $R\in \{eb^{-1}, hb^{-1}\};$

\item $a=d, c=f, h=e;$

\item $a=d^{-1}, f=i, h=b;$

\item $a=d,c=f, h=b;$

\item $a=d,c=i, h=b;$

\item $a=d,f=i, h=b;$

\item $a=g,f=i$ and $R\in \{he^{-1}, hb^{-1}\};$

\item $h=e, h=b, e=b, a=g^{-1}$ and $R\in \{cf, fi\};$

\item $h=e, h=b, e=b, a=d^{-1}$ and $R\in \{cf, ci, fi\};$

\item $h=e, h=b, e=b, d=g^{-1}, f=i^{-1}.$

\end{enumerate}

\end{lemma}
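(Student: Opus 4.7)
My plan is to prove Lemma \ref{lem6} by curvature distribution, exactly mirroring the template already used for Lemma \ref{lem5}. Working modulo the equivalences listed before Lemma \ref{lem1}, I would pick a representative item — say item (1), $a=g^{-1}$, $c=f^{-1}$, $h=e$ — and carry out the argument there in detail, then note that the remaining twenty items differ only in which triple (or occasional quintuple) of identifications is in force and dispatch them by the obvious relabelling.

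Step one reads off the forced vertex labels of the region $\Delta$ in Figure \ref{1}(i) from the given identifications: each equality creates an admissible $2$-cycle at the corresponding pair of vertices, pinning down six of the nine $l_\Delta(v)$ words in item (1). Three independent identifications let at most four vertices attain degree $2$ simultaneously, so the base bound from Remark 1 is $c(\Delta)\leq c(2,2,2,2,3,3,3,3,3)=\pi/3$. Step two enumerates, up to equivalence, the possible degree-$2$ configurations and, in each, traces labels around $\Delta$ by propagating the companion letter from every degree-$2$ vertex to its two neighbours. In most sub-cases this forces either a relation of the form $x^k=1$ for a nontrivial $x\in G$ (contradicting torsion-freeness of $G$), or an admissible $2$-cycle lying outside the prescribed list, which reduces the case to one already handled in Lemmas \ref{lem1}--\ref{lem5}.

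The sub-cases that still carry strictly positive curvature are then resolved in step three by the mirror trick of Figure \ref{casef1}(iii): choose the neighbouring region $\widehat{\Delta}$ across a specific edge incident to a designated degree-$2$ vertex of $\Delta$, verify that its degree sequence is at best $(2,2,2,3,3,3,3,3,4)$ so that $c(\widehat{\Delta})\leq -\pi/6$, and conclude $c(\Delta)+c(\widehat{\Delta})\leq 0$. Summing these cancellations over all regions of $K$ using Gauss--Bonnet forces the total curvature to be non-positive, contradicting $4\pi$ unless $K$ is empty, and hence $\mathcal{P}$ is aspherical.

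The main obstacle will be the bookkeeping for the $R$-extended items, such as (3), (6), (12), and especially (19)--(21), where the extra admissible cycle $R$ or the extra identification $h=e,\,h=b,\,e=b$ injects further degree-$2$ vertices around $v_h$, $v_e$, $v_b$ and several of the label-propagation chains of step two fail to close into a torsion-freeness contradiction. I expect to dispatch those by widening $\widehat{\Delta}$ to absorb two edges of $\Delta$, yielding the stronger compensating bound $c(\widehat{\Delta})\leq -\pi/3$. The uniqueness of the assignment $\Delta\mapsto\widehat{\Delta}$ — needed so that positive curvature is not double-counted — should follow from the identification-induced asymmetry in each item, which singles out the transfer edge unambiguously.
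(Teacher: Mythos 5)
Your plan is essentially the paper's own argument: the paper proves this lemma by curvature distribution, fixing the degree-two vertices forced by the identifications, enumerating the (at most four) simultaneous degree-two configurations, chasing labels around $\Delta$ until either a torsion-freeness contradiction or a vertex of degree greater than three appears, and, in the one residual sub-case with $c(\Delta)=\pi/6$, transferring the positive curvature to a uniquely associated neighbouring region $\widehat{\Delta}$ with $c(\widehat{\Delta})\leq -\pi/6$; the paper carries this out for the representatives $a=g^{-1},c=f^{-1},h=e$ and $a=d^{-1},c=f,h=e$ and declares the remaining items similar, just as you propose. Your outline matches this in method and in the key numerical bounds ($c(2,2,2,2,3,3,3,3,3)=\pi/3$ base case, $-\pi/6$ compensation), so the only difference is that the detailed label-propagation bookkeeping is deferred rather than displayed.
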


\begin{proof} We prove the Lemma for the cases  $a=g^{-1}, c=f^{-1}, h=e$ and $a=d^{-1}, c=f, h=e$. The proof of the remaining cases follows similarly.
\begin{enumerate}
\item First we consider the case $a=g^{-1}, c=f^{-1}, h=e$.  

%

\begin{figure}[H]
\centering
        \includegraphics[width=11cm]{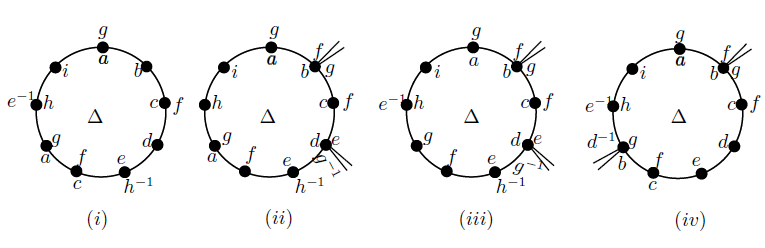}
        \caption{Regions $\Delta$ and $\widehat{\Delta}$}
    \label{case37}
\end{figure} 
Here $l(v_a)=ag, l(v_c)=cf, l(v_e)=eh^{-1}, l(v_f)=fc, l(v_g)=ga$ and $l(v_h)=he^{-1}$ as shown in the Figure  \ref{case37}(i). There are the following three
cases to consider:

\begin{enumerate}
\item $d_{\Delta}(v_a)=d_{\Delta}(v_c)=d_{\Delta}(v_e)=d_{\Delta}(v_g)=2;$

\item $d_{\Delta}(v_a)=d_{\Delta}(v_c)=d_{\Delta}(v_e)=d_{\Delta}(v_h)=2;$

\item $d_{\Delta}(v_a)=d_{\Delta}(v_c)=d_{\Delta}(v_f)=d_{\Delta}(v_h)=2.$

\end{enumerate}

\begin{enumerate}

\item Here $l(v_a)=ag$ and $l(v_c)=cf$ which implies  $l(v_b)=fbgw,$ where, $w\in\{e^{-1}, b^{-1}, h^{-1}\}$ so $d_{\Delta}(v_b)>3$. Same as $l(v_c)=cf$ and $l(v_e)=eh^{-1}$ which implies $l(v_d)=edg^{-1}w,$ where, $w\in\{e^{-1}, b^{-1}, h^{-1}\}$ so $d_{\Delta}(v_d)>3.$ Therefore $c(\Delta)\leq 0$ as shown in the Figure \ref{case37}(ii).

\item Here $l(v_a)=ag$ and $l(v_c)=cf$ which implies  $l(v_b)=fbgw,$ where, $w\in\{e^{-1}, b^{-1}, h^{-1}\}$ so $d_{\Delta}(v_b)>3.$ Same as $l(v_c)=cf$ and $l(v_e)=eh^{-1}$ which implies $l(v_d)=edg^{-1}w,$ where, $w\in\{e^{-1}, b^{-1}, h^{-1}\}$ so $d_{\Delta}(v_d)>3.$ Therefore $c(\Delta)\leq 0$ as shown in the Figure \ref{case37}(iii). 

\item Here $l(v_a)=ag$ and $l(v_c)=cf$ which implies  $l(v_b)=fbgw,$ where, $w\in\{e^{-1}, b^{-1}, h^{-1}\}$ so $d_{\Delta}(v_b)>3$ as shown in the Figure \ref{case37}(iv). Same as $l(v_f)=fc$ and$l(v_h)=he^{-1}$ which implies $l(v_g)=bgd^{-1}w,$ where, $w\in\{e^{-1}, b^{-1}, h^{-1}\}$ so $d_{\Delta}(v_g)>3.$ Therefore $c(\Delta)\leq 0.$
\end{enumerate}

\item Now we consider the case $a=d^{-1}, c=f, h=e$.


\begin{figure}[H]
\centering
        \includegraphics[width=10cm]{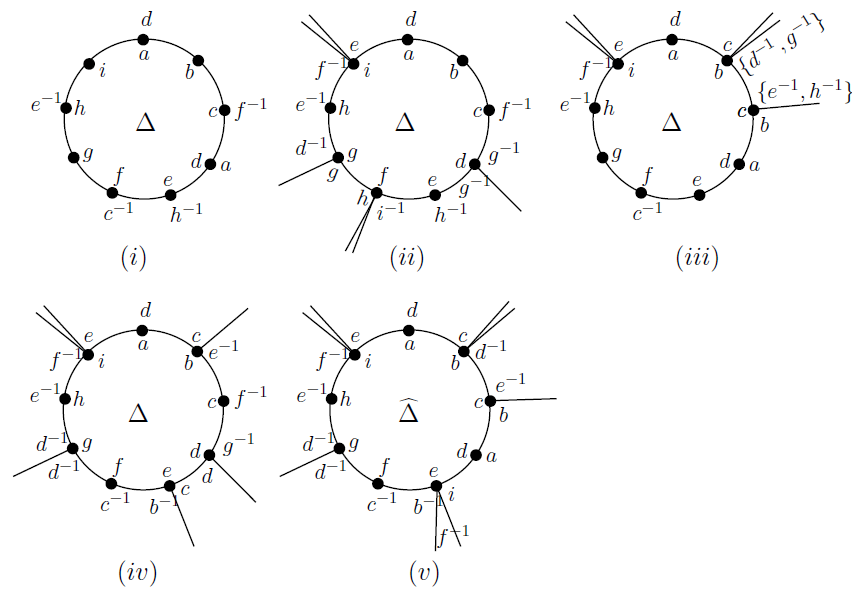}
        \caption{Regions $\Delta$ and $\widehat{\Delta}$}
    \label{casef2}
\end{figure} 

Here $l_{\Delta}(v_h)=he^{-1}, l_{\Delta}(v_a)=ad, l_{\Delta}(v_c)=cf^{-1}, l_{\Delta}(v_d)=da, l_{\Delta}(v_f)=fc^{-1}$ and $l_{\Delta}(v_e)=eh^{-1}$ as shown in the Figure \ref{casef2}(i). There are the following three cases to consider:

\begin{enumerate}

\item $d_{\Delta}(v_h)=d_{\Delta}(v_a)=d_{\Delta}(v_c)=d_{\Delta}(v_e)=2;$

\item $d_{\Delta}(v_h)=d_{\Delta}(v_a)=d_{\Delta}(v_d)=d_{\Delta}(v_f)=2;$

\item $d_{\Delta}(v_h)=d_{\Delta}(v_a)=d_{\Delta}(v_c)=d_{\Delta}(v_f)=2.$

\end{enumerate}

\begin{enumerate}

\item Here $l_{\Delta}(v_a)=ad$ and $l_{\Delta}(v_h)=he^{-1}$ which implies $l_{\Delta}(v_i)=f^{-1}iew,$ where $w\in\{e^{-1}, b^{-1}, h^{-1}\}$ so $d_{\Delta}(v_i)>3$ as shown in the Figure \ref{casef2}(ii). 

Assume $d_{\Delta}(v_d)=d_{\Delta}(v_f)=d_{\Delta}(v_g)=3$. Then $l_{\Delta}(v_d)=dg^{-2}$ which implies
$l_{\Delta}(v_g)=gd^{-1}\{d^{-1}, a, a^{-1}, g\}$. But $l_{\Delta}(v_g)=gd^{-1}\{d^{-1}, a, a^{-1}\}$ implies $d^3=1$ or $g=1$, a contradiction so $l_{\Delta}(v_g)=d^{-1}g^2$ which forces $l_{\Delta}(v_f)=i^{-1}fhw$, where $w\in \{h^{-1},e^{-1},b^{-1}\}$ which implies $d_{\Delta}(v_f)>3$, a contradiction. So degree of at least one of the vertices $v_d$, $v_f$ and $v_g$ must be greater than three. Therefore $c(\Delta)\leq 0$.

\item Here $l_{\Delta}(v_a)=ad$ and $l_{\Delta}(v_h)=he^{-1}$ which implies  $l_{\Delta}(v_i)=f^{-1}iew,$ where $w\in\{e^{-1}, b^{-1}, h^{-1}\}$ so $d_{\Delta}(v_i)>3$ as shown in the Figure \ref{casef2}(iii).

Assume $d_{\Delta}(v_c)=d_{\Delta}(v_b)=3$. Since $l_{\Delta}(v_d)=da$ which implies
$l_{\Delta}(v_c)=cb\{e^{-1}, h^{-1}, b^{-1}\}$. But $l_{\Delta}(v_c)=cbb^{-1}$ implies $c=1$ a contradiction so $l_{\Delta}(v_c)=cb\{e^{-1}, h^{-1}\}.$ If $l_{\Delta}(v_c)=cbe^{-1}$ then $l_{\Delta}(v_b)=cbd^{-1}w_1,$ where $w_1\in\{e^{-1}, h^{-1}, b^{-1}\},$ a contradiction. If $l_{\Delta}(v_c)=cbh^{-1}$ then $l_{\Delta}(v_b)=cbg^{-1}w_1$, where $w_1\in\{e^{-1}, h^{-1}, b^{-1}\},$ a contradiction. So degree of at least one of the vertices $v_c$ and $v_b$ must be greater than three. Therefore $c(\Delta)\leq 0$.

\item Here $l_{\Delta}(v_a)=ad$ and $l_{\Delta}(v_h)=he^{-1}$ which implies $l_{\Delta}(v_i)=f^{-1}iew,$ where $w\in\{e^{-1}, b^{-1}, h^{-1}\}$ so $d_{\Delta}(v_i)>3$ as shown in the Figure \ref{casef2}(iv).

Assume $d_{\Delta}(v_d)=d_{\Delta}(v_e)=d_{\Delta}(v_g)=3$. Then $l_{\Delta}(v_g)=gd^{-2}$ which implies
$l_{\Delta}(v_d)=g^{-1}d\{d, g^{-1}, a, a^{-1}\}$. But $l_{\Delta}(v_d)=g^{-1}d\{g^{-1}, a\}$ implies $g^3=1$ or $g=1$, a contradiction. If $l_{\Delta}(v_d)=g^{-1}da^{-1}$ which forces $l_{\Delta}(v_e)=b^{-1}eb^{-1}w$, where $w\in \{h, e, b\}$ which implies $d_{\Delta}(v_e)>3$, a contradiction. If $l_{\Delta}(v_d)=g^{-1}d^{2}$ then $l_{\Delta}(v_e)=ceb^{-1}.$ So curvature of $c(\Delta)=\frac{\pi}{6}$. Add $c(\Delta)$ to $c(\widehat{\Delta})$ as shown in the Figure \ref{casef2}(v). Observe in $\widehat{\Delta}$. Here, $l_{\widehat{\Delta}}(v_h)=he^{-1}$ and $l_{\widehat{\Delta}}(v_a)=ad$ implies $ l_{\widehat{\Delta}}(v_i)=f^{-1}iew $ where, $ w\in \{b^{-1}, e^{-1}, h^{-1}\}$ so $ d_{\widehat{\Delta}}(v_i)\geq 3 $. Same as $l_{\widehat{\Delta}}(v_c)=cbe^{-1}$ and $l_{\widehat{\Delta}}(v_a)=ad$ implies $ l_{\widehat{\Delta}}(v_b)=cbd^{-1}w $ where, $ w\in \{b^{-1}, e^{-1}. h^{-1}\} $ so $ d_{\widehat{\Delta}}(v_b)\geq 3.$ Since $d_{\widehat{\Delta}}(v_h)=d_{\widehat{\Delta}}(v_a)=d_{\widehat{\Delta}}(v_d)=d_{\widehat{\Delta}}(v_f)=2$, $d_{\widehat{\Delta}}(v_e)=4, d_{\widehat{\Delta}}(v_c)=3$ and $d_{\widehat{\Delta}}(v_g)> 2$. So  $c(\widehat{\Delta})\leq c(2,2,2,2,3,3,4,4,4)\leq -\frac{\pi}{6}$. Therefore $c(\Delta)+ c(\widehat{\Delta}) \leq 0$.

\end{enumerate}
\end{enumerate}                                                                           
\end{proof}

\begin{lemma} \label{lem7} The relative presentation $\mathcal{P}= \langle G,t~|~ s(t)\rangle$ is aspherical if any one of the following holds:
\begin{enumerate}

\item $a=d^{-1}, c=f, e=b;$

\item $a=g, h=b;$

\item $a=g, f=i;$

\item $a=d, c=i;$

\item $a=d, e=b;$

\item $d=g, h=e;$

\item $a=d^{-1}, c=i, h=b;$

\item $a=d^{-1}, f=i, h=e;$

\item $a=d, c=f^{-1}, e=b;$

\item $d=g^{-1}, f=i, h=e;$

\item $a=d, c=i, h=e;$

\item $a=g^{-1}, c=f, e=b;$

\item $a=d, f=i^{-1}, e=b;$

\item $a=g, c=f, h=b;$

\item $a=d, f=i, e=b;$

\item $a=g, f=i, e=b;$

\item $a=g,f=i^{-1}, h=b;$

\item $a=d,c=f, e=b;$

\item $a=g^{-1},f=i, h=e.$
\end{enumerate}
\end{lemma}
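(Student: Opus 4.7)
The plan is to prove this lemma by the curvature distribution method, following exactly the template established in Lemmas \ref{lem5} and \ref{lem6}. Since each of the nineteen configurations forces (at most) three vertices of $\Delta$ to have degree two among those identifiable from the hypotheses alone, but permits up to four degree-two vertices once the forced labels are taken into account, the baseline curvature bound $c(\Delta) \leq c(2,2,2,2,3,3,3,3,3) = \pi/3$ is positive in principle, and the argument must rule out the bad sub-cases or compensate with a neighbor.

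I would first treat a representative case, say $a=d^{-1}, c=f, e=b$, in detail. The hypotheses fix $l_{\Delta}(v_a)=ad$, $l_{\Delta}(v_d)=da^{-1}$, $l_{\Delta}(v_c)=cf^{-1}$, $l_{\Delta}(v_f)=fc$, $l_{\Delta}(v_e)=eb^{-1}$, $l_{\Delta}(v_b)=be$. Reading around $\Delta$, I would enumerate the possible choices of which four vertices simultaneously realise degree two; typically not all quadruples are compatible, because two adjacent forced labels propagate a further word at the next vertex and push its degree above three (this is the mechanism already exploited in Lemma \ref{lem6}). For each surviving quadruple I would try to push $d_{\Delta}(v_\ast)\geq 3$ at one additional vertex using the torsion-free hypothesis on $G$ and the admissibility list $S$, exactly the ``$x^3=1$ or $x=1$, a contradiction'' step. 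If this succeeds then $c(\Delta) \leq c(2,2,2,2,3,3,3,3,3) = \pi/3$ is not actually attained and in fact $c(\Delta)\le 0$.

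When this degree-pushing step fails in a sub-case, $c(\Delta)$ can equal $\pi/6$, and then I would follow the recipe of Lemma \ref{lem6}(2)(c): identify the uniquely-determined neighbouring region $\widehat{\Delta}$ across the edge produced by the one vertex whose label was fully pinned down, transcribe the labels forced on $\widehat{\Delta}$ by the same hypotheses, and verify that $\widehat{\Delta}$ admits a degree sequence no better than $(2,2,2,2,3,3,4,4,4)$, so $c(\widehat{\Delta})\leq -\pi/6$ and the pairing gives $c(\Delta)+c(\widehat{\Delta})\leq 0$. Together these two arguments complete the representative case, and the remaining eighteen cases follow by the same method with the roles of $(a,d,g)$ and $(c,f,i)$ and $(e,h,b)$ permuted according to the equivalence $a\leftrightarrow c^{-1}$, $d\leftrightarrow i^{-1}$, $e\leftrightarrow h^{-1}$, $f\leftrightarrow g^{-1}$, $b\leftrightarrow b^{-1}$.

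The main obstacle, as usual in these curvature-distribution proofs, is bookkeeping: one has to enumerate all admissible quadruples of degree-two vertices on $\Delta$, and for each $c(\Delta)>0$ sub-case locate the correct neighbour $\widehat{\Delta}$ whose label propagation is forced by the same relations $a=?$, $c=?$, $h=?$ chosen in the hypothesis, then verify the degree sequence of $\widehat{\Delta}$. The only place where genuine new content enters is ensuring that one of the propagated labels on $\widehat{\Delta}$ produces at least two vertices of degree $\geq 4$ (not merely $\geq 3$); this is where the precise admissibility list $S$ and the torsion-freeness of $G$ are used to rule out short cycles that would otherwise save $\widehat{\Delta}$ from negative curvature. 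Given that the preceding lemmas have already handled the cases with more forced identifications (three or six equalities), the nineteen cases here, each carrying exactly one or at most two equalities among the $\{a,d,g\}$/$\{c,f,i\}$/$\{e,h,b\}$ groups, should admit a uniform treatment with no essentially new ideas beyond those in Lemmas \ref{lem5}--\ref{lem6}.
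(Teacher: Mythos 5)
Your plan is the same strategy the paper uses: curvature distribution, with exactly the representative case $a=d^{-1}, c=f, e=b$, enumeration of which vertices of $\Delta$ can simultaneously have degree two, degree-pushing at the remaining vertices via torsion-freeness, and pairing each residual positively curved region with a neighbour $\widehat{\Delta}$. However, as written the proposal has two concrete shortfalls. First, your curvature budget is understated: you claim that when degree-pushing fails one only gets $c(\Delta)\leq\pi/6$, so a neighbour with $c(\widehat{\Delta})\leq-\pi/6$ (degree sequence $(2,2,2,2,3,3,4,4,4)$) suffices. In the actual analysis there are sub-cases (e.g.\ four degree-two vertices with all five remaining vertices of degree exactly three, which the label propagation does not always exclude) where $c(\Delta)\leq\pi/3$, and the compensating neighbour must then be shown to satisfy $c(\widehat{\Delta})\leq-\pi/3$ or even $-2\pi/3$; the paper obtains these stronger bounds by pinning down the labels of $\widehat{\Delta}$ explicitly (for instance a neighbour with a single degree-two vertex and all others of degree at least three gives $-2\pi/3$). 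A proof following your stated target bound of $-\pi/6$ would not close those sub-cases. Second, your reduction of the other eighteen cases to the representative one ``by permuting $(a,d,g)$, $(c,f,i)$, $(e,h,b)$ according to the equivalence'' is not available: the listed configurations are not all in one orbit of the equivalence $a\leftrightarrow c^{-1}$, $d\leftrightarrow i^{-1}$, $e\leftrightarrow h^{-1}$, $f\leftrightarrow g^{-1}$, $b\leftrightarrow b^{-1}$ (otherwise they would not be listed separately); each genuinely requires its own, albeit analogous, enumeration, which is what the paper means by ``the remaining cases follow similarly.''

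Beyond these two points, the substance of the lemma is precisely the bookkeeping you defer: fixing the admissible degree-two quadruples, propagating labels such as $l_{\Delta}(v_d)=da^{-1}g^{-1}$ or $l_{\Delta}(v_b)=be^{-1}c$, discarding branches by relations like $a^{3}=1$, $d^{-3}=1$, $g^{2}=1$ contradicting torsion-freeness, and in each surviving positive-curvature branch exhibiting the specific $\widehat{\Delta}$ together with its verified degree sequence. Also note some sign slips in your forced labels: with $c=f$ and $e=b$ the degree-two labels are $fc^{-1}$ and $be^{-1}$ (not $fc$ and $be$), and with $a=d^{-1}$ the label at $v_d$ is $da$, not $da^{-1}$; as stated your labels would instead encode $c=f^{-1}$, $e=b^{-1}$, $a=d$. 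So the approach is right, but the proof as proposed is a scheme whose quantitative targets need strengthening and whose case-reduction claim needs to be replaced by carrying out the analogous analyses.
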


\begin{proof} We prove the Lemma for the case  $a=d^{-1}, c=f, e=b$. The proof of the remaining cases follow similarly.


Here $l_{\Delta}(v_b)=be^{-1}, l_{\Delta}(v_a)=ad, l_{\Delta}(v_c)=cf^{-1}, l_{\Delta}(v_d)=da, l_{\Delta}(v_f)=fc^{-1}$ and $l_{\Delta}(v_e)=eb^{-1}$ as shown in the Figure \ref{casef3(iv)}(i). There are the following two cases to consider:

\begin{figure}[H]
\centering
        \includegraphics[width=11cm]{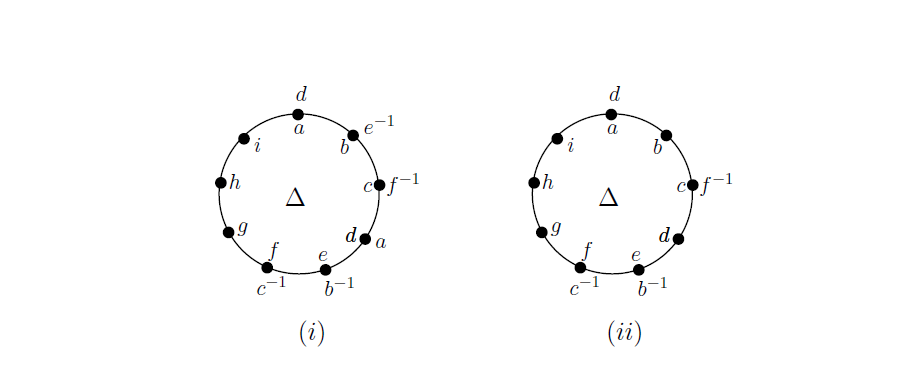}
        \caption{Regions $\Delta$ and $\widehat{\Delta}$}
    \label{casef3(iv)}
\end{figure}

\begin{enumerate} 

\item $d_{\Delta}(v_a)=d_{\Delta}(v_c)=d_{\Delta}(v_e)=d_{\Delta}(v_f)=2;$ 

\item $d_{\Delta}(v_b)=d_{\Delta}(v_c)=d_{\Delta}(v_e)=d_{\Delta}(v_f)=2.$

 \end{enumerate}

\begin{enumerate} 

\item $d_{\Delta}(v_a)=d_{\Delta}(v_c)=d_{\Delta}(v_e)=d_{\Delta}(v_f)=2.$

Here $l_{\Delta}(v_a)=ad, l_{\Delta}(v_c)=cf^{-1}, l_{\Delta}(v_f)=fc^{-1}$ and $l_{\Delta}(v_e)=eb^{-1}$ as shown in the Figure \ref{casef3(iv)}(ii). There are the following five cases to consider:

\begin{enumerate}

\item $d_{\Delta}(v_b)=d_{\Delta}(v_d)=d_{\Delta}(v_g)=d_{\Delta}(v_h)=d_{\Delta}(v_i)=3;$

\item $d_{\Delta}(v_g)>3$ only;

\item $d_{\Delta}(v_d)>3$ only;

\item $d_{\Delta}(v_h)>3$ only;

\item $d_{\Delta}(v_d)>3$ only.

\end{enumerate}

\begin{enumerate}

\begin{figure}[H]
\centering
        \includegraphics[width=7cm]{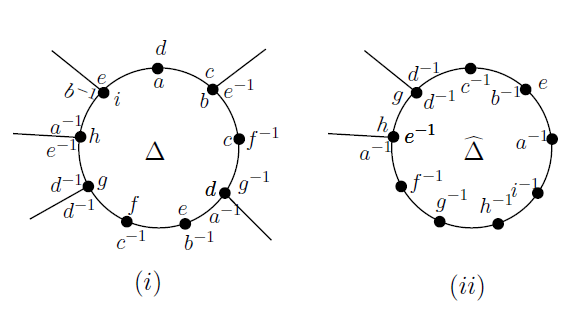}
        \caption{Regions $\Delta$ and $\widehat{\Delta}$}
    \label{casef3(iii)}
\end{figure} 

\item Here $l_{\Delta}(v_a)=ad$ and $l_{\Delta}(v_c)=cf^{-1}$ which implies $l_{\Delta}(v_b)=be^{-1}c$. Same as $l_{\Delta}(v_e)=eb^{-1}$ and $l_{\Delta}(v_c)=cf^{-1}$ which implies $l_{\Delta}(v_d)=da^{-1}g^{-1}$ as shown in the Figure \ref{casef3(iii)}(i).

Since $l_{\Delta}(v_a)=ad$ which implies  
$l_{\Delta}(v_i)=ie\{b^{-1}, h^{-1}\}$. If $l_{\Delta}(v_i)=ieb^{-1}$ which implies $l_{\Delta}(v_h)=ha^{-1}\{b^{-1}, e^{-1}\}$. If $l_{\Delta}(v_h)=ha^{-1}b^{-1}$ then $l_{\Delta}(v_g)=ga^{-1}d^{-1}$ implies $g=1$, a contradiction. If $l_{\Delta}(v_h)=ha^{-1}e^{-1}$ then $l_{\Delta}(v_g)=gd^{-2}$. So $c(\Delta)\leq \frac{\pi}{3}$. Add $c(\Delta)$ to $c(\widehat{\Delta})$ as shown in the Figure \ref{casef3(iii)}(ii). Observe that in $\widehat{\Delta}$. Here, $l_{\widehat{\Delta}}(v_{e^{-1}})=e^{-1}ha^{-1}, l_{\widehat{\Delta}}(v_{d^{-1}})=gd^{-2}$ and $l_{\widehat{\Delta}}(v_{b^{-1}})=b^{-1}e$. Since $d_{\widehat{\Delta}}(v_{b^{-1}})=2$ and all other vetices have degree atleast $3$. So $c(\widehat{\Delta}) \leq c(2, 3, 3, 3, 3, 3, 3, 3, 3)=-\frac{2\pi}{3}$. So $c(\Delta) + c(\widehat{\Delta})\leq \frac{\pi}{3} + (-\frac{2\pi}{3})= -\frac{\pi}{3}$.

If $l_{\Delta}(v_i)=ieh^{-1}$ then it solve similarly  as $l_{\Delta}(v_i)=ieb^{-1}$. 

\item $d_{\Delta}(v_g)>3$ only;

Here $l_{\Delta}(v_a)=ad$ and $l_{\Delta}(v_c)=cf^{-1}$ which implies $l_{\Delta}(v_b)=be^{-1}c$ as shown in the Figure \ref{casef3}(i). Same as $l_{\Delta}(v_e)=eb^{-1}$ and $l_{\Delta}(v_c)=cf^{-1}$ which implies $l_{\Delta}(v_d)=da^{-1}g^{-1}$. So $c(\Delta)\leq \frac{\pi}{6}$. Add $c(\Delta)$ to $c(\widehat{\Delta})$ as shown in the Figure \ref{casef3}(ii). Observe that in $\widehat{\Delta}$.

\begin{figure}[H]
\centering
        \includegraphics[width=11cm]{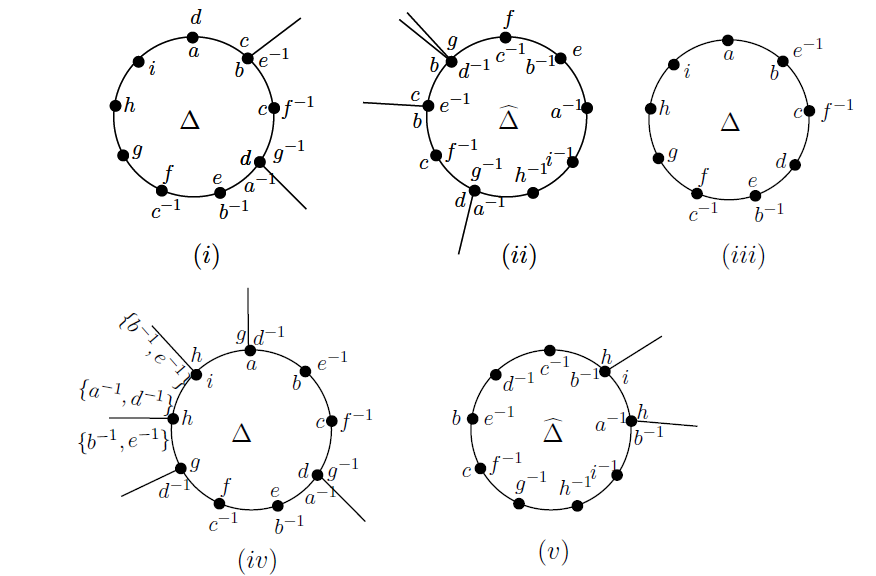}
        \caption{Regions $\Delta$ and $\widehat{\Delta}$}
    \label{casef3}
\end{figure} 
 
Here $l_{\widehat{\Delta}}(v_{e^{-1}})=e^{-1}cb$ and $l_{\widehat{\Delta}}(v_{c^{-1}})=c^{-1}f$ which implies $l_{\widehat{\Delta}}(v_{d^{-1}})=bd^{-1}gw,$ where $w\in \{b^{-1}, h^{-1}, e^{-1}\}$ so $d_{\widehat{\Delta}}(v_{d^{-1}})>3$.

Since $d_{\widehat{\Delta}}(v_{c^{-1}})=d_{\widehat{\Delta}}(v_{b^{-1}})=d_{\widehat{\Delta}}(v_{f^{-1}})=2, d_{\widehat{\Delta}}(v_{d^{-1}})>3$ and all other vertices have degree atleast $3$. So $c(\widehat{\Delta}) \leq c(2, 2, 2, 3, 3, 3, 3, 3, 4)=-\frac{\pi}{6}$. So $c(\Delta) + c(\widehat{\Delta})\leq \frac{\pi}{6} + (-\frac{\pi}{6})= 0$.

\item $d_{\Delta}(v_d)>3$ only;

\item $d_{\Delta}(v_h)>3$ only;

\item $d_{\Delta}(v_d)>3$ only.

All three cases are solved similarly as $d_{\Delta}(v_g)>3$ only.

\end{enumerate}

\item $d_{\Delta}(v_b)=d_{\Delta}(v_c)=d_{\Delta}(v_e)=d_{\Delta}(v_f)=2.$

Here $l_{\Delta}(v_e)=eb^{-1}, l_{\Delta}(v_f)=fc^{-1}, l_{\Delta}(v_b)=be^{-1}$ and $l_{\Delta}(v_c)=cf^{-1}$ as shown in the Figure \ref{casef3}(iii).
There are the following six cases to consider:
\begin{enumerate}
\item $d_{\Delta}(v_d)=d_{\Delta}(v_g)=d_{\Delta}(v_h)=d_{\Delta}(v_i)=d_{\Delta}(v_a)=3$;
\item  $d_{\Delta}(v_g)>3$ only;
\item  $d_{\Delta}(v_h)>3$ only;
\item  $d_{\Delta}(v_i)>3$ only;
\item  $d_{\Delta}(v_a)>3$ only;
\item  $d_{\Delta}(v_d)>3$ only.
\end{enumerate}


\begin{enumerate}
\item Here $l_{\Delta}(v_c)=cf^{-1}$ and $l_{\Delta}(v_e)=eb^{-1}$ so $l_{\Delta}(v_d)=da^{-1}g^{-1}$ as shown in the Figure \ref{casef3}(iv). Since $l_{\Delta}(v_b)= be^{-1}$ implies $l_{\Delta}(v_a)= ad^{-1}w,$ where $w\in\{a, d^{-1}, g, g^{-1}\}.$ If $l_{\Delta}(v_a)= ad^{-1}\{a, d^{-1}, g^{-1} \}$ which implies $a^3=1$ or $d^{-3}=1$ or $g^{2}=1$ a contradiction. If $l_{\Delta}(v_a)= ad^{-1}g$ implies $l_{\Delta}(v_i)= ih\{e^{-1}, b^{-1}\}$. If $l_{\Delta}(v_i)= ihb^{-1}$ then $l_{\Delta}(v_h)= ha^{-1}\{b^{-1},e^{-1}\}$. If $l_{\Delta}(v_h)= ha^{-1}b^{-1}$ then $l_{\Delta}(v_g)=ga^{-1}d^{-1}$ implies $a^2=1$, a contradiction. If $l_{\Delta}(v_h)= ha^{-1}e^{-1}$ then $l(v_g)= gd^{-2}$. So $c(\Delta)\leq \frac{\pi}{3}$. Add $c(\Delta)$ to $c(\widehat{\Delta})$ as shown in the Figure \ref{casef3}(v). Observe that in $\widehat{\Delta}$. Here $l_{\widehat{\Delta}}(v_{b^{-1}})=b^{-1}ih$ and $l_{\widehat{\Delta}}(v_{a^{-1}})=a^{-1}e^{-1}h$ so $d_{\widehat{\Delta}}(v_{e^{-1}})=d_{\widehat{\Delta}}(v_{f^{-1})}=2$ and all other edges have degree atleast $3$. So $c(\widehat{\Delta}) \leq -\frac{\pi}{3}$. So $c(\Delta) + c(\widehat{\Delta})\leq \frac{\pi}{3} + (-\frac{\pi}{3})= 0$.


\begin{figure}[H]
\centering
        \includegraphics[width=11cm]{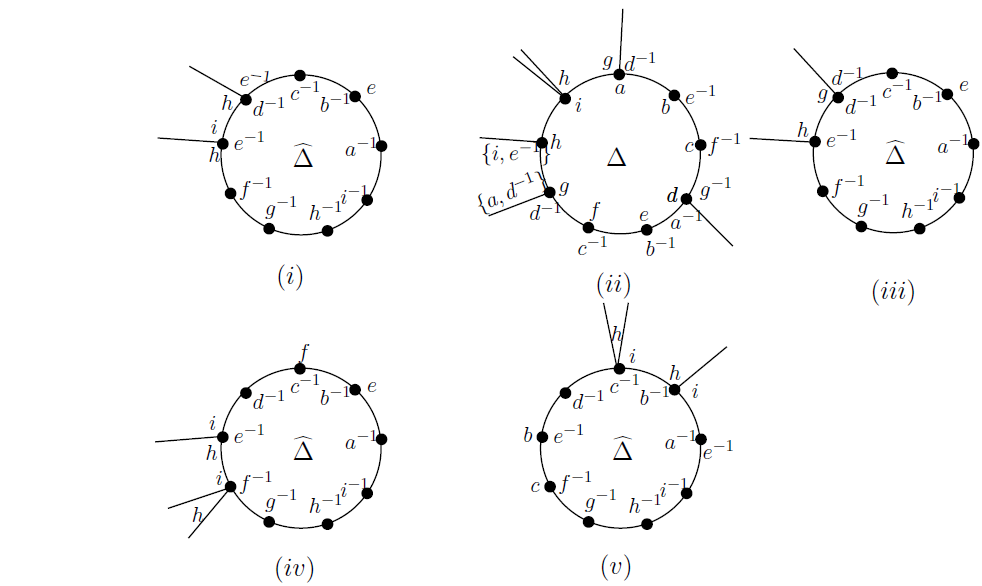}
        \caption{Regions $\Delta$ and $\widehat{\Delta}$}
    \label{casef3(i)}
\end{figure}

If $l_{\Delta}(v_i)= ihe^{-1}$ then $l_{\Delta}(v_h)= hd^{-1}\{b^{-1},e^{-1}\}$. If $l_{\Delta}(v_h)= hd^{-1}b^{-1}$ implies $l_{\Delta}(v_g)=ga^{-1}d^{-1}$ implies $a^2=1$, a contradiction, so $l_{\Delta}(v_h)= hd^{-1}e^{-1}$ which implies $l_{\Delta}(v_g)= gd^{-2}$. So $c(\Delta)\leq \frac{\pi}{3}$. Add $c(\Delta)$ to $c(\widehat{\Delta})$ as shown in the Figure \ref{casef3(i)}(i). Observe that in $\widehat{\Delta}$. Here $l_{\widehat{\Delta}}(v_{e^{-1}})=e^{-1}ih$ and $l_{\widehat{\Delta}}(v_{d^{-1}})=d^{-1}e^{-1}h$. Since $d_{\widehat{\Delta}}(v_{b^{-1}})=2$ so all other edges have degree atleast $3$. So $c(\widehat{\Delta}) \leq -\frac{2\pi}{3}$. So $c(\Delta) + c(\widehat{\Delta})\leq \frac{\pi}{3} + (-\frac{2\pi}{3})= -\frac{\pi}{3}$.

\item $d_{\Delta}(v_g)>3;$
\item $d_{\Delta}(v_h)>3.$
Both are solving similarly as the case $d_{\Delta}(v_d)=d_{\Delta}(v_g)=d_{\Delta}(v_h)=d_{\Delta}(v_i)=d_{\Delta}(v_a)=3$.

\item Here $l_{\Delta}(v_c)=cf^{-1}$ and $l_{\Delta}(v_e)=eb^{-1}$ so $l_{\Delta}(v_d)=da^{-1}g^{-1}$ as shown in the Figure \ref{casef3(i)}(ii). Since $l_{\Delta}(v_f)= fc^{-1}$ which implies $l_{\Delta}(v_g)= d^{-1}gw,$ where $w\in\{d^{-1}, g, a, a^{-1}, g\}$ which implies $g^3=1$ or $g=1$, a contradiction. So $l_{\Delta}(v_g)= d^{-1}g\{d^{-1}, a\}$. If $l_{\Delta}(v_g)= gd^{-2}$ implies $l_{\Delta}(v_h)= e^{-1}h\{a, a^{-1}, d, d^{-1}, g, g^{-1}\}$. When we continued this process we obtained $c(2, 2, 2, 2, 3, 3, 3, 3, 3, 4) \leq \frac{\pi}{6}$ as shown in the Figure \ref{casef3(i)}(ii). Add $c(\Delta)$ to $c(\widehat{\Delta})$ as shown in the Figure \ref{casef3(i)}(iii). Observe that in $\widehat{\Delta}$. Here $l_{\widehat{\Delta}}(v_{e^{-1}})=e^{-1}hw,$ where $w\in \{a, a^{-1}, d, d^{-1}, g, g^{-1}\}$ and $l_{\widehat{\Delta}}(v_{d^{-1}})=gd^{-2}$ implies $d_{\widehat{\Delta}}(v_{b^{-1}})=2$ and all other vertices have degree atleast $3$. So $c(\widehat{\Delta}) \leq -\frac{2\pi}{3}$. So $c(\Delta) + c(\widehat{\Delta})\leq \frac{\pi}{6} + (-\frac{2\pi}{3})= -\frac{\pi}{2}$.

 If $l_{\Delta}(v_g)= gad^{-1}$ implies $l_{\Delta}(v_h)= ihw,$ where $w\in \{b^{-1}, e^{-1}$. If $l_{\Delta}(v_h)=ihe^{-1}$ and since $l_{\Delta}(v_a)=ad^{-1}g$ implies $l_{\Delta}(v_i)= f^{-1}ihw$, where $w\in \{h^{-1}, b^{-1}, e^{-1}\}$. So $c(\Delta) \leq \frac{\pi}{6}$. Add $c(\Delta)$ to $c(\widehat{\Delta})$ as shown in the Figure \ref{casef3(i)}(iv). Observe that in $\widehat{\Delta}$. Here $l_{\widehat{\Delta}}(v_{e^{-1}})=e^{-1}ih$ and $l_{\widehat{\Delta}}(v_{f^{-1}})= f^{-1}ihw$, where $w\in \{h^{-1}, b^{-1}, e^{-1}\}$ implies$d_{\widehat{\Delta}}(v_{f^{-1}})>3, d_{\widehat{\Delta}}(v_{b^{-1}})=d_{\widehat{\Delta}}(v_{c^{-1})}=2$ and all other edges have degree atleast $3$. So $c(\widehat{\Delta}) \leq -\frac{\pi}{2}$. So $c(\Delta) + c(\widehat{\Delta})\leq \frac{\pi}{6} + (-\frac{\pi}{2})= -\frac{\pi}{3}$. 

 If $l_{\Delta}(v_h)=ihb^{-1}$ and since $l_{\Delta}(v_a)=ad^{-1}g$ implies $l_{\Delta}(v_i)= c^{-1}ihw$, where $w\in \{h^{-1}, b^{-1}, e^{-1}\}$. So $c(\Delta) \leq \frac{\pi}{6}$. Add $c(\Delta)$ to $c(\widehat{\Delta})$ as shown in the Figure \ref{casef3(i)}(v). Observe that in $\widehat{\Delta}$. Here $l_{\widehat{\Delta}}(v_{b^{-1}})=b^{-1}ih$ and $l_{\widehat{\Delta}}(v_{c^{-1}})= c^{-1}ihw$, where $w\in \{h^{-1}, b^{-1}, e^{-1}\}$ so $v_{c^{-1}}$ has degree greater then $3$, $d_{\widehat{\Delta}}(v_{e^{-1}})=d_{\widehat{\Delta}}(v_{f^{-1})}=2$ and all other vertices have degree atleast $3$. So $c(\widehat{\Delta}) \leq -\frac{\pi}{2}$. So $c(\Delta) + c(\widehat{\Delta})\leq \frac{\pi}{6} + (-\frac{\pi}{2})= -\frac{\pi}{3}$.

\item $d_{\Delta}(v_a)>3.$

Proved similarly as $d_{\Delta}(v_i)>3.$

\item Here $l_{\Delta}(v_b)= be^{-1}$ so $l_{\Delta}(v_a)= ad^{-1}w,$
 where $w\in\{a, d^{-1}, g, g^{-1}\}.$ If $l_{\Delta}(v_a)=ad^{-1}\{a, d^{-1}\}$ which implies $a^3=1$ or $d^{-3}=1$, a contradiction, as shown in the Figure \ref{casef3(ii)}(i). So $l_{\Delta}(v_a)= ad^{-1}w,$ where $w\in \{g, g^{-1}\}$. If $l_{\Delta}(v_a)= ad^{-1}g$ solved similarly as $d_{\Delta}(v_g)>3$. If $l_{\Delta}(a)=ad^{-1}g^{-1}$ which implies $l_{\Delta}(v_i)=if^{-1}w,$ where $w\in \{i, f^{-1}, c, c^{-1}\}$. If $l_{\Delta}(v_i)= if^{-1}c$ implies $i=1$ a contradiction. So $l_{\Delta}(v_i)=if^{-1}w,$ where $w\in \{i, f^{-1}, c^{-1}\}$ and continued this process we obtained $c(\Delta) \leq \frac{\pi}{6}$ as shown in Figure \ref{casef3(ii)}(i).
 

\begin{figure}[H]
\centering
        \includegraphics[width=10cm]{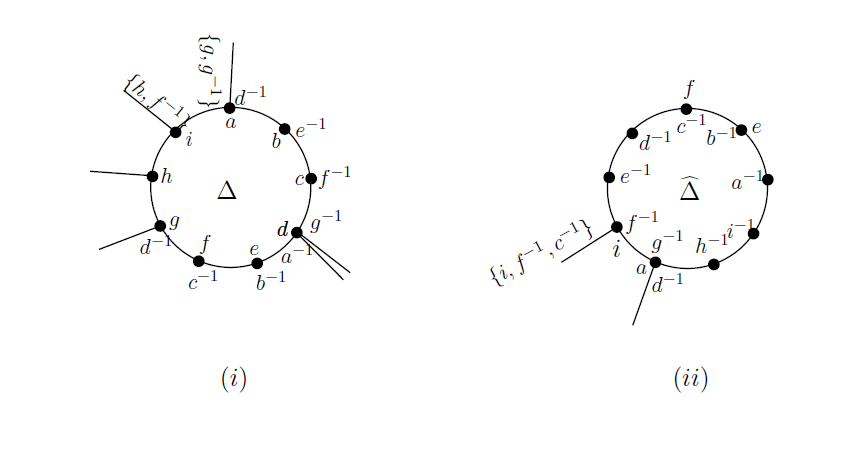}
        \caption{Regions $\Delta$ and $\widehat{\Delta}$}
    \label{casef3(ii)}
\end{figure} 

Add $c(\Delta)$ to $c(\widehat{\Delta})$ as shown in the Figure \ref{casef3(ii)}(ii). Observe that in $\widehat{\Delta}$. Here $l_{\widehat{\Delta}}(v_{g^{-1}})=g^{-1}ad^{-1},$ and $l_{\widehat{\Delta}}(v_{f^{-1}})=if^{-1}w,$ where $w\in \{i, f^{-1}, c^{-1} \}$ implies $d_{\widehat{\Delta}}(v_{b^{-1}})= d_{\widehat{\Delta}}(v_{c^{-1}})=2$ and all other vertices have degree atleast $3$. So $c(\widehat{\Delta}) \leq -\frac{\pi}{3}$. So $c(\Delta) + c(\widehat{\Delta})\leq \frac{\pi}{6} + (-\frac{\pi}{3})= -\frac{\pi}{6}$.  

\end{enumerate}

\end{enumerate}

\end{proof}

The above Lemmas \ref{lem1}-\ref{lem7} help to establish the following main result of the paper.
\begin{theorem} \label{t} The equation $$ atbtct^{-1}dtetft^{-1}gthtit^{-1}=1.$$
is solvable modulo following exceptional cases:
\begin{enumerate}

\item $d=g,f=i, h=e;$

\item $a=d, a=g, d=g$ and $R\in \{cf^{-1}, ci^{-1}, fi^{-1},  he^{-1}, hb^{-1}, eb^{-1}\};$

\item $h=e, h=b, e=b$ and $R\in \{ad^{-1}, ag^{-1}, dg^{-1}\};$

\item $a=d, a=g, d=g, h=e$ and $R\in \{cf, cf^{-1}, ci, ci^{-1}, fi, fi^{-1}\};$

\item $a=d, a=g, d=g, h=b$ and $R\in \{cf, cf^{-1}, ci, ci^{-1}, fi, fi^{-1}\};$

\item $a=d, a=g, d=g, e=b$ and $R\in \{cf, cf^{-1}, ci, ci^{-1}, fi, fi^{-1}\};$

\item $h=e, h=b, e=b, a=d^{-1}$ and $R\in \{ cf^{-1}, ci^{-1}, fi^{-1}\};$

\item $h=e, h=b, e=b, a=d$ and $R\in \{cf, cf^{-1}, ci, ci^{-1}, fi, fi^{-1}\};$

\item $h=e, h=b, e=b, a=g^{-1}$ and $R\in \{cf^{-1}, fi^{-1}\};$

\item $h=e, h=b, e=b, a=g$ and $R\in \{cf^{-1}, fi, fi^{-1}\};$

\item $h=e, h=b, e=b, d=g^{-1}, f=i;$

\item $h=e, h=b, e=b, d=g, f=i;$

\item $a=d, a=g, d=g, c=i, c=f, f=i;$

\item $a=d, a=g, d=g, h=e, h=b, e=b;$

\item $a=d, a=g, d=g, c=i, c=f, f=i$ and $R\in \{he^{-1}, eb^{-1},\};$

\item $a=d, a=g, d=g, h=e, h=b, e=b$ and $R\in \{cf, cf^{-1}, ci, ci^{-1}, fi, fi^{-1}\};$

\item $a=d, a=g, d=g, c=i, c=f, f=i, h=e, h=b, e=b.$

\end{enumerate}
\end{theorem}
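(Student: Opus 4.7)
The plan is to establish asphericity of $\mathcal{P}$ --- which, by the discussion in Section 2, implies solvability of the equation via injectivity of $\tau\colon G\to P(G)$ --- through an exhaustive case analysis on the set of admissible labels from $S$, applying Lemmas \ref{lem1}--\ref{lem7} to every configuration outside the seventeen listed exceptional ones. Throughout I will exploit the equivalence $a\leftrightarrow c^{-1}$, $b\leftrightarrow b^{-1}$, $d\leftrightarrow i^{-1}$, $e\leftrightarrow h^{-1}$, $f\leftrightarrow g^{-1}$ (together with $t\leftrightarrow t^{-1}$, cyclic permutation, and inversion), which in particular identifies admissibles in the $A$-family $\{ad^{\pm 1},ag^{\pm 1},dg^{\pm 1}\}$ with those in the $C$-family $\{cf^{\pm 1},ci^{\pm 1},fi^{\pm 1}\}$ and swaps $hb^{-1}$ with $eb^{-1}$, roughly halving the bookkeeping.

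The argument proceeds by stratifying on the number $N$ of admissible labels from $S$. For $N=0$, Remark 1(1) yields $c(\Delta)\leq -\pi$, so no reduced spherical diagram can exist. For $N=1$, Lemma \ref{lem3} delivers $c(\Delta)\leq -\pi/3$ in each of the eight inequivalent sub-configurations. For $N=2$, or $N=3$ without a full ``special triple'' from Remark 1 (items 12--20), Lemma \ref{lem4} gives $c(\Delta)\leq 0$; the mixed triples that escape Lemma \ref{lem4} fall under Lemmas \ref{lem5}, \ref{lem6}, or \ref{lem7}, each of which transfers positive curvature from $\Delta$ to a uniquely chosen neighbour $\widehat{\Delta}$ of compensating negative curvature. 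When a special triple of the form $\{a=d^{-1},a=g^{-1},d=g\}$, $\{a=d^{-1},a=g,d=g^{-1}\}$, or $\{a=d,a=g^{-1},d=g^{-1}\}$ (and its $C$-family image under the equivalence above) is admissible, the substitution $x=t^{-1}at$ converts the relator into a form to which the weight test applies; this is precisely the content of Lemmas \ref{lem1} and \ref{lem2}, together with their extensions by all compatible additional admissibles listed there.

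What remains untreated are the ``even-sign'' triples $\{a=d,a=g,d=g\}$, $\{c=f,c=i,f=i\}$, $\{h=e,h=b,e=b\}$, for which no substitution of the weight-test type is available, together with the one-from-each-family configurations such as item 1 where six potentially-degree-two vertices make $c(\Delta)$ potentially as large as $\pi$ and no uniform curvature-sharing neighbour can be found. When these arise alone or combine with one another or with further admissibles subject to the combinatorial constraints in the statement, the seventeen items enumerate precisely the residual configurations. The main obstacle will be the bookkeeping: systematically verifying that the union of the cases dispatched by Lemmas \ref{lem1}--\ref{lem7}, together with their images under the equivalence, is exactly the complement of the exceptional list. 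The most delicate point is items 13--17, where two or three special triples are simultaneously admissible and one must argue concurrently the failure of both the weight-test substitution and every candidate curvature-sharing assignment, rather than merely noting that a single technique fails.
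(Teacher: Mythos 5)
Your proposal follows essentially the same route as the paper: the theorem is obtained exactly by stratifying on the admissible labels from $S$ (modulo the stated equivalence), dispatching every non-exceptional configuration through the weight-test Lemmas \ref{lem1}--\ref{lem2} and the curvature-distribution Lemmas \ref{lem3}--\ref{lem7}, and simply excluding the seventeen listed cases. One small clarification: since the theorem only asserts solvability \emph{modulo} the exceptional cases, no argument about the simultaneous failure of the weight test and curvature distribution is required for items 13--17 (that observation belongs to the concluding remark, not to the proof), so the ``most delicate point'' you flag is not actually needed.
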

\begin{remark}
\normalfont
We remark that the list of exceptional cases given in Theorem is still open. The weight test and curvature distribution can not be applied to these cases to prove  Levin's conjecture. Therefore, some new methods need to be developed to establish the validity of Levin's conjecture for this group equation of length 9.  
\end{remark}

\end{document}